\providecommand{\U}[1]{\protect\rule{.1in}{.1in}}
\tikzset{>=Triangle}
\newtheorem{theorem}{Theorem}[section]
\newtheorem{corollary}[theorem]{Corollary}
\newtheorem{remark}[theorem]{Remark}
\newtheorem{lemma}[theorem]{Lemma}
\numberwithin{equation}{section}
\pgfplotsset{compat=1.17}
\begin{document}
\title[Constants of the Kahane--Salem--Zygmund inequality]{Constants of the Kahane--Salem--Zygmund inequality asymptotically bounded by
$1$}
\author[D. Pellegrino]{Daniel Pellegrino}
\address{Departamento de Matem\'{a}tica \\
Universidade Federal da Para\'{\i}ba \\
58.051-900 - Jo\~{a}o Pessoa, Brazil.}
\email{daniel.pellegrino@academico.ufpb.br}
\author[A. Raposo Jr.]{Anselmo Raposo Jr.}
\address{Departamento de Matem\'{a}tica \\
Universidade Federal do Maranh\~{a}o \\
65085-580 - S\~{a}o Lu\'{\i}s, Brazil.}
\email{anselmo.junior@ufma.br}
\thanks{D. Pellegrino is supported by CNPq and Grant 2019/0014 Paraiba State Research
Foundation (FAPESQ) }
\subjclass[2010]{47A07, 15A60, 15A51 }
\keywords{Multilinear forms; Hadamard matrices; sequence spaces; Berlekamp's switching game}

\begin{abstract}
The Kahane--Salem--Zygmund inequality for multilinear forms in $\ell_{\infty}$
spaces claims that, for all positive integers $m,n_{1},...,n_{m}$, there
exists an $m$-linear form $A\colon\ell_{\infty}^{n_{1}}\times\cdots\times
\ell_{\infty}^{n_{m}}\longrightarrow\mathbb{K}$ \ ($\mathbb{K}=\mathbb{R}$ or
$\mathbb{C}$) of the type
\[
A(z^{(1)},...,z^{(m)})=\sum_{j_{1}=1}^{n_{1}}\cdots\sum_{j_{m}=1}^{n_{m}}\pm
z_{j_{1}}^{\left(  1\right)  }\cdots z_{j_{m}}^{\left(  m\right)  }\text{,}%
\]
satisfying
\[
\Vert A\Vert\leq C_{m}\max\left\{  n_{1}^{1/2},\ldots,n_{m}^{1/2}\right\}
{\textstyle\prod\limits_{j=1}^{m}}n_{j}^{1/2}\text{,}%
\]
for
\[
C_{m}\leq\kappa\sqrt{m\log m}\sqrt{m!}%
\]
and a certain $\kappa>0.$ Our main result shows that given any $\epsilon>0$
and any positive integer $m,$ there exists a positive integer $N$ such that
\[
C_{m}<1+\epsilon\text{,}%
\]
when we consider $n_{1},...,n_{m}>N$. In addition, while the original proof of
the Kahane--Salem--Zygmund relies in highly non-deterministic arguments, our
approach is constructive. We also provide the same asymptotic bound (which is
shown to be optimal in some cases) for the constant of a related
non-deterministic inequality proved by G. Bennett in 1977. Applications to
Berlekamp's switching game are given.

\end{abstract}
\maketitle
\tableofcontents

\section{Introduction}

All along this paper we shall denote $\mathbb{K}=\mathbb{R}$ or $\mathbb{C}$
and we shall represent $\mathbb{K}^{n}$ endowed with the $\ell_{p}$-norm by
$\ell_{p}^{n}$; the set of all positive integers shall be denoted by
$\mathbb{N}$. The investigation of bilinear forms and, more generally,
multilinear forms $A\colon\ell_{p_{1}}^{n}\times\cdots\times\ell_{p_{m}}%
^{n}\longrightarrow\mathbb{K}$ with coefficients $\pm1$ has important
applications in several fields of mathematics and has been explored by several
authors in different contexts since the beginning of the mid last century.

The ideas that guide this topic rest on the search of unimodular multilinear
forms with relatively small norms, i.e., on the existence of multilinear forms
with coefficients $1$ and $-1$ (sometimes the coefficients are allowed to be
complex numbers with modulus $1$) and smallest possible norms. In 1931,
Bohnenblust and Hille \cite{bh} constructed an $m$-linear form $A_{n}%
\colon\ell_{\infty}^{n}\times\cdots\times\ell_{\infty}^{n}\longrightarrow
\mathbb{C}$ with complex coefficients with modulus $1$ satisfying%
\[
\left\Vert A_{n}\right\Vert \leq n^{(m+1)/2}%
\]
and they have shown that the exponent $\left(  m+1\right)  /2$ is optimal,
i.e., it cannot be replaced by a smaller one (even if we multiply the
right-hand-side of the inequality by a positive constant). However, in many
instances, deterministic arguments are not available and probabilistic methods
come into play. In the 1970's and 1980's several authors have followed this
vein (see \cite{be2,carl,kah,man,var}) and nowadays these probabilistic
approaches are explored in different lines of research (see
\cite{alon,bayart,boasfootball,boas,bk,TBJS,dgm,defantlivro,defmas,mastylo,PRS}
and the references therein). In general, the probabilistic arguments are
enough to provide optimal exponents but the constants involved are not precise.

These classes of inequalities providing unimodular multilinear forms (and
polynomials) with small norms by means of probabilistic methods are usually
called Kahane--Salem--Zygmund inequalities (KSZ inequalities for short).
Recently, important far-reaching generalizations of these inequalities have
been developed by Masty\l o--Szwedek (\cite{mastylo}) and Defant--Masty\l o
(\cite{defmas}).

In this paper we shall be interested in the multilinear versions of the
KSZ\ inequality (see \cite[Lemma 6.1]{jfa} and \cite{alb,PRS}, which are
obtained following the approach of Boas in \cite[Theorem 4]{boas}; see also
\cite[Theorems 1.1 and 1.2]{man}). For the case of bilinear forms, a KSZ-type
inequality was independently proved by Bennett \cite{be1} in 1977 (see also
\cite{be2}), in a quite complete fashion. Applications of the KSZ inequality
can be found, for instance, in \cite{israel,bk,carl,vel,var}. The KSZ
inequality for multilinear forms asserts that for all positive integers $m,n$
and $p_{1},\ldots,p_{m}\in\lbrack1,\infty]$, there exists an $m$-linear form
$A_{n}\colon\ell_{p_{1}}^{n}\times\cdots\times\ell_{p_{m}}^{n}\longrightarrow
\mathbb{K}$ of the type
\[
A_{n}(z^{(1)},\ldots,z^{(m)})=%
{\textstyle\sum\limits_{i_{1}=1}^{n}}
\cdots%
{\textstyle\sum\limits_{i_{m}=1}^{n}}
\pm z_{i_{1}}^{(1)}\cdots z_{i_{m}}^{(m)}\text{,}%
\]
such that
\begin{equation}
\Vert A_{n}\Vert\leq C_{m}n^{\frac{1}{\min\{\max\{2,p_{1}^{\ast}\},\ldots
,\max\{2,p_{m}^{\ast}\}\}}+\sum_{k=1}^{m}\max\left\{  \frac{1}{2}-\frac
{1}{p_{k}},0\right\}  }\text{,} \label{I1}%
\end{equation}
with%
\begin{equation}
C_{m}=\sqrt{32m\log\left(  6m\right)  }\sqrt{m!}\text{.} \label{I2}%
\end{equation}
Above and henceforth, as usual, we consider $1/\infty=0$, the conjugate of $p$
is denoted by $p^{\ast}$, i.e., $p^{\ast}=p/\left(  p-1\right)  $.

For the case of bilinear forms Bennett's approach \cite[Proposition 3.2]{be1}
is more general, allowing different dimensions at the domain of the bilinear
forms. More precisely, Bennett's inequality affirms that there is a constant
$C\geq1$ such that for all $p_{1},p_{2}\in\lbrack1,\infty]$ and all positive
integers $n_{1},n_{2}$, there exists a bilinear form $A_{n_{1},n_{2}}%
\colon\ell_{p_{1}}^{n_{1}}\times\ell_{p_{2}}^{n_{2}}\longrightarrow\mathbb{R}$
with coefficients $\pm1$ satisfying%
\begin{equation}
\left\Vert A_{n_{1},n_{2}}\right\Vert \leq C\max\left\{  n_{2}^{1/p_{2}^{\ast
}}n_{1}^{\max\left\{  \frac{1}{2}-\frac{1}{p_{1}},0\right\}  },n_{1}%
^{1/p_{1}^{\ast}}n_{2}^{\max\left\{  \frac{1}{2}-\frac{1}{p_{2}},0\right\}
}\right\}  \text{.} \label{I3}%
\end{equation}
The KSZ inequality (following Bennett's style) was recently extended to
$m$-linear forms (\cite{alb,PRS}) as follows (see also \cite{defmas} for a
recent far-reaching generalization): let $m,n_{1},\ldots,n_{m}$ be positive
integers and $p_{1},\ldots,p_{m}\in\left[  1,\infty\right]  $. There exist a
constant $C_{m}$ and an $m$-linear form $A_{n_{1},...,n_{m}}\colon\ell_{p_{1}%
}^{n_{1}}\times\cdots\times\ell_{p_{m}}^{n_{m}}\longrightarrow\mathbb{K}$ of
the type
\[
A_{n_{1},...,n_{m}}\left(  z^{(1)},\ldots,z^{(m)}\right)  =%
{\textstyle\sum\limits_{i_{1}=1}^{n_{1}}}
\cdots%
{\textstyle\sum\limits_{i_{m}=1}^{n_{m}}}
\pm z_{i_{1}}^{(1)}\cdots z_{i_{m}}^{(m)}\text{,}%
\]
such that%
\begin{equation}
\Vert A_{n_{1},...,n_{m}}\Vert\leq C_{m}\left(
{\textstyle\sum\limits_{k=1}^{m}}
n_{k}\right)  ^{\frac{1}{\min\{\max\{2,p_{1}^{\ast}\},\ldots,\max
\{2,p_{m}^{\ast}\}\}}}%
{\textstyle\prod\limits_{k=1}^{m}}
n_{k}^{\max\left\{  \frac{1}{2}-\frac{1}{p_{k}},0\right\}  }\text{.}
\label{I4}%
\end{equation}

It is obvious that (\ref{I4}) is equivalent to write
\begin{equation}
\Vert A_{n_{1},...,n_{m}}\Vert\leq D_{m}\max_{k=1,\ldots,m}\left\{
n_{k}^{\frac{1}{\min\{\max\{2,p_{1}^{\ast}\},\ldots,\max\{2,p_{m}^{\ast}\}\}}%
}\right\}
{\textstyle\prod\limits_{k=1}^{m}}
n_{k}^{\max\left\{  \frac{1}{2}-\frac{1}{p_{k}},0\right\}  } \label{I5}%
\end{equation}
for some constant $D_{m}\geq C_{m}$. A straightforward computation shows that
when $m=2$ and $p_{1},p_{2}\in\lbrack2,\infty]$, the inequality (\ref{I5})
recovers (\ref{I3}). The proof of the above inequalities follows the lines of
the proof of (\ref{I1}) and the constants $C_{m},D_{m}$ are essentially the
constant that appears in (\ref{I2}).

The exponent of $n$ in (\ref{I1}) is optimal (see \cite{PRS}; in fact this
result was previously proved by Mantero--Tonge \cite[Theorem 1.2]{man}) and,
when $p_{1},\ldots,p_{m}\in\lbrack2,\infty]$, all the exponents of
$n_{1},\ldots,n_{m}$ in all the expressions above are also sharp (see
\cite{alb}). On the other hand, the exact values of the constants
$C,C_{m},D_{m}$ are unknown; the approaches of Bennett, Boas and
Mantero--Tonge (\cite{be1,boas,man}) are non-constructive and rely on highly
non deterministic methods, giving no precise hint of the exact value of the
constants involved.

In this paper, among other results, we show that, provided that $p_{1}%
=\cdots=p_{m}=\infty$, we have (\ref{I1}), (\ref{I3}) and (\ref{I5}) with
constants \textquotedblleft asymptotically dominated\textquotedblright\ by 1.
More precisely, our first main result provides the aforementioned asymptotic
bounds for (\ref{I3}) and (\ref{I5}) when $p_{1}=\cdots=p_{m}=\infty$ and
reads as follows:

\begin{theorem}
\label{090}Let a positive integer $m$ and $\epsilon>0$ be given. There exists
a positive integer $N$ such that, for all $n_{1},...,n_{m}>N$, there exists an
$m$-linear form $A_{n_{1},\ldots,n_{m}}\colon\ell_{\infty}^{n_{1}}\times
\cdots\times\ell_{\infty}^{n_{m}}\longrightarrow\mathbb{K}$ of the type
\[
A_{n_{1},\ldots,n_{m}}(z^{(1)},...,z^{(m)})=%
{\textstyle\sum\limits_{i_{1}=1}^{n_{1}}}
\cdots%
{\textstyle\sum\limits_{i_{m}=1}^{n_{m}}}
\pm z_{i_{1}}^{(1)}\cdots z_{i_{m}}^{(m)}\text{,}%
\]
such that%
\[
\Vert A_{n_{1},\ldots,n_{m}}\Vert\leq\left(  1+\epsilon\right)  \max\left\{
n_{1}^{1/2},\ldots,n_{m}^{1/2}\right\}
{\textstyle\prod\limits_{j=1}^{m}}
n_{j}^{1/2}.
\]

\end{theorem}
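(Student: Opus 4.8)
The plan is to build the multilinear form explicitly from Hadamard-type matrices rather than random signs, exploiting the fact that a Hadamard matrix $H$ of order $N$ has operator norm exactly $\sqrt N$ as a map $\ell_\infty^N\to\ell_2^N$ (equivalently $\ell_1^N\to\ell_\infty^N$), which is the ``ideal'' bound that random matrices only achieve up to logarithmic factors. Concretely, when $n_1=\cdots=n_m=N$ is an order for which a Hadamard matrix exists, I would take the $m$-linear form whose coefficient array is a suitable tensor-type contraction of $m$ Hadamard matrices, chosen so that evaluating against unimodular inputs and estimating coordinate-by-coordinate telescopes through the bounds $\|H\colon\ell_\infty^N\to\ell_2^N\|=\sqrt N$ at each of the $m$ slots and one extra $\ell_2\to\ell_\infty$ (or Cauchy--Schwarz) step that produces the $\max\{n_k^{1/2}\}$ factor. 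This is exactly the Bohnenblust--Hille/Boas scheme but with the probabilistic matrix replaced by a deterministic Hadamard matrix, so the constant coming out of the estimate is a genuine constant (independent of $m$), not $\sqrt{m\log m}\sqrt{m!}$.

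The second step handles dimensions $n_1,\dots,n_m$ that are not Hadamard orders and need not be equal. Here I would invoke the known density of Hadamard orders together with a monotonicity/restriction argument: given $n_k$, pick a Hadamard order $N_k$ with $n_k\le N_k\le(1+\delta)n_k$ for $\delta$ small (such $N_k$ exists once $n_k$ is large, by results on gaps between Hadamard orders — or, if one wants a fully elementary route, by using $N_k$ a power of $2$, giving $N_k\le 2n_k$ and then absorbing the resulting constant by a different device, see below), build the $N_1\times\cdots\times N_m$ form as in step one, and then \emph{restrict} it to the first $n_k$ coordinates in each variable. Restriction does not increase the norm, and it keeps the $\pm1$ coefficient structure; the price is replacing each $n_k^{1/2}$ by $N_k^{1/2}\le(1+\delta)^{1/2}n_k^{1/2}$, so the overall bound is multiplied by at most $(1+\delta)^{(m+1)/2}$, which can be made $<1+\epsilon$ by choosing $\delta$ small depending on $m$ and $\epsilon$. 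Choosing $N$ large enough that every $n_k>N$ admits such a nearby Hadamard order $N_k$ completes the reduction.

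The remaining gap is that the constant $\kappa$ produced in step one by the naive Hadamard estimate is some fixed number bigger than $1$, whereas the theorem demands the constant approach $1$. This is where the genuinely new idea is needed, and I expect it to be the main obstacle: one must \emph{tensorize} to kill the constant. The point is that if $A$ realizes the inequality on $\ell_\infty^{N}\times\cdots\times\ell_\infty^{N}$ with constant $\kappa$, then an appropriate $m$-linear form built on a block/tensor power — roughly, using a Hadamard matrix of order $N^r$ (which exists, e.g. as the $r$-fold Kronecker power of an order-$N$ Hadamard matrix) — still has the clean norm $\sqrt{N^r}$, so the \emph{same} constant $\kappa$ now multiplies $(N^r)^{(m+1)/2}$; the sharpness of the exponent forces the effective per-inequality constant to behave multiplicatively under this tensor power, i.e. like $\kappa^{1/r}\to 1$ as $r\to\infty$. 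Thus by first fixing $r$ large with $\kappa^{1/r}<1+\epsilon/2$, then running steps one and two at dimensions that are suitable Kronecker-power Hadamard orders (and finally restricting to arbitrary large $n_k$ as in step two with $\delta$ small), one obtains the constant $1+\epsilon$. Making the tensor-power bookkeeping rigorous — precisely defining the contracted coefficient array so that it both stays unimodular and has the Kronecker-power norm behaviour, and verifying the exponent-sharpness input that lets one take the $r$-th root — is the technical heart of the argument.
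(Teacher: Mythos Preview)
Your first two steps match the paper's proof exactly: an explicit $m$-linear form built from Hadamard matrices at Hadamard dimensions, followed by density of Hadamard orders (the paper uses the set $\{4^i12^j:i,j\ge 0\}$, whose consecutive ratios tend to $1$) plus coordinate restriction to reach arbitrary large $n_k$ at the price of a factor $(1+\delta)^{(m+1)/2}$.

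The gap is your belief that Step~1 produces a constant $\kappa>1$ that must then be beaten down by a tensor-power trick. It does not. The paper's Lemma~\ref{LemaM1} takes the \emph{chain} coefficient array
\[
a_{i_1\cdots i_m}=h^{(2)}_{i_1 i_2}\,h^{(3)}_{i_2 i_3}\cdots h^{(m)}_{i_{m-1} i_m},
\]
built from $m-1$ Hadamard matrices $H_{n_2},\dots,H_{n_m}$ (with $n_1\le\cdots\le n_m$), and shows directly that $\|A_0\|\le n_m^{1/2}\prod_{k=1}^m n_k^{1/2}$ with constant exactly $1$. The mechanism is iterated Cauchy--Schwarz: applying it in the outermost index $i_m$ and expanding the square produces the inner sum $\sum_{i_m}h^{(m)}_{i_{m-1}i_m}h^{(m)}_{j_{m-1}i_m}=n_m\delta_{i_{m-1}j_{m-1}}$, which collapses the double sum over $i_{m-1},j_{m-1}$ back to a single one; then repeat. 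No spare constant ever appears, so your Step~3 is unnecessary. Incidentally, your tensor-power idea \emph{can} be turned into an indirect proof that $\kappa=1$ --- from the easy inequality $\|A_N\|^r\le\|(A_N)^{\otimes r}\|=\|A_{N^r}\|\le\kappa(N^r)^{(m+1)/2}$ one gets $\|A_N\|\le\kappa^{1/r}N^{(m+1)/2}$ and lets $r\to\infty$ --- but note that the improved constant $\kappa^{1/r}$ lands at dimension $N$, not at $N^r$ as you wrote, and the whole detour is longer than the direct computation of Lemma~\ref{LemaM1}.
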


As an immediate consequence we have the same asymptotic bound for (\ref{I1}):

\begin{corollary}
Let a positive integer $m$ and $\epsilon>0$ be given. There exists a positive
integer $N$ such that, for all $n>N$, there exists an $m$-linear form
$A_{n}\colon\ell_{\infty}^{n}\times\cdots\times\ell_{\infty}^{n}%
\longrightarrow\mathbb{K}$ of the type
\[
A_{n}(z^{(1)},...,z^{(m)})=%
{\textstyle\sum\limits_{i_{1}=1}^{n}}
\cdots%
{\textstyle\sum\limits_{i_{m}=1}^{n}}
\pm z_{i_{1}}^{(1)}\cdots z_{i_{m}}^{(m)}\text{,}%
\]
such that
\[
\Vert A_{n}\Vert\leq\left(  1+\epsilon\right)  n^{\frac{m+1}{2}}.
\]

\end{corollary}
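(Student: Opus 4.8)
The plan is to obtain this Corollary as an immediate specialization of Theorem \ref{090} to the diagonal case in which all the dimensions coincide. First I would apply Theorem \ref{090} to the given positive integer $m$ and the given $\epsilon>0$; this produces a positive integer $N$ with the property stated there. Then, fixing any integer $n>N$ and choosing $n_{1}=\cdots=n_{m}=n$, we certainly have $n_{1},\ldots,n_{m}>N$, so Theorem \ref{090} supplies an $m$-linear form of the prescribed unimodular type on $\ell_{\infty}^{n_{1}}\times\cdots\times\ell_{\infty}^{n_{m}}$, which in the present situation is exactly $\ell_{\infty}^{n}\times\cdots\times\ell_{\infty}^{n}$; call it $A_{n}$.

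It then remains only to simplify the right-hand side of the estimate in Theorem \ref{090}. With all $n_{j}$ equal to $n$ one has $\max\{n_{1}^{1/2},\ldots,n_{m}^{1/2}\}=n^{1/2}$ and $\prod_{j=1}^{m}n_{j}^{1/2}=n^{m/2}$, so the bound $\Vert A_{n}\Vert\leq(1+\epsilon)\max\{n_{1}^{1/2},\ldots,n_{m}^{1/2}\}\prod_{j=1}^{m}n_{j}^{1/2}$ becomes $\Vert A_{n}\Vert\leq(1+\epsilon)\,n^{1/2}\,n^{m/2}=(1+\epsilon)\,n^{(m+1)/2}$, which is precisely the claimed inequality. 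There is no genuine obstacle here: once Theorem \ref{090} is in hand, the Corollary is just the equal-dimensions instance together with the one-line computation of the exponent $\tfrac{1}{2}+\tfrac{m}{2}=\tfrac{m+1}{2}$; in particular, the form $A_{n}$ inherits whatever explicit (constructive) description Theorem \ref{090} provides.
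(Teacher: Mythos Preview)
Your proposal is correct and matches the paper's approach: the paper presents this corollary as an immediate consequence of Theorem~\ref{090} (it gives no separate proof), and your argument---specializing to $n_{1}=\cdots=n_{m}=n$ and simplifying the bound to $(1+\epsilon)n^{(m+1)/2}$---is exactly that specialization.
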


For the particular case of bilinear forms we prove a more general result
(which, in particular, shows that the constants of Bennett's inequality are
\textquotedblleft uniformly\textquotedblright\ asymptotically bounded by $1$).
For instance, among other results encompassing different dimensions
$n_{1},n_{2}$ (see Figure 2), we show that if $p_{1},p_{2}\in\lbrack1,\infty]$
and $\epsilon>0$, there is a positive integer $N$ (depending just on
$\epsilon$) such that, whenever $n>N$, there is a bilinear form $A\colon
\ell_{p_{1}}^{n}\times\ell_{p_{2}}^{n}\longrightarrow\mathbb{K}$ of the type%
\[
A(z^{(1)},z^{(2)})=%
{\textstyle\sum\limits_{i=1}^{n}}
{\textstyle\sum\limits_{j=1}^{n}}
\pm z_{i}^{(1)}z_{j}^{(2)}%
\]
such that%
\[
\Vert A\Vert\leq\left(  1+\epsilon\right)  \max\left\{  n^{1/p_{2}^{\ast}%
}n^{\max\left\{  \frac{1}{2}-\frac{1}{p_{1}},0\right\}  },n^{1/p_{1}^{\ast}%
}n^{\max\left\{  \frac{1}{2}-\frac{1}{p_{2}},0\right\}  }\right\}
\]
and the constant $1$ is optimal whenever $p_{1},p_{2}\in\lbrack1,2]$.

The paper is organized as follows. The proof of Theorem \ref{090} is presented
in Section 2, and the case of bilinear forms is considered in Section 3. In
Section 4 we present some remarks, including an improvement of the constants
of (\ref{I5}) and in the final section we apply our results to the
Gale--Berlekamp switching game.

\section{The proof of Theorem \ref{090}}

We recall that a Hadamard matrix of order $n$ is a square matrix
$\mathbf{H=}\left[  h_{ij}\right]  _{n\times n}$, with $h_{ij}\in\left\{
-1,1\right\}  $ for all $i,j$, such that
\[
\mathbf{HH}^{\top}=n\mathbf{I}_{n}\text{,}%
\]
where $\mathbf{I}_{n}$ is the identity matrix of order $n$ and $\mathbf{H}%
^{\top}$ is the transpose of $\mathbf{H}$. Thus, if $u_{1},\ldots,u_{n}$ are
the rows of $\mathbf{H}$, then the inner product of the rows is
\[
\left\langle u_{i},u_{j}\right\rangle =n\delta_{ij}%
\]
for all $i,j$, where $\delta_{ij}$ denotes the Kronecker delta. In particular,
the rows of a Hadamard matrix are pairwise orthogonal. For more details we
refer to the book \cite{KJH}. We begin with the following lemma that adapts a
technique borrowed from \cite{bh} (see also \cite{PRS}):

\begin{lemma}
\label{LemaM1}Let $n_{1}\leq n_{2}\leq\cdots\leq n_{m}$ be positive integers
such that for each $k=2,\ldots,m$, there exists a Hadamard matrix
$\mathbf{H}_{n_{k}}=\left[  h_{ij}^{\left(  k\right)  }\right]  _{n_{k}\times
n_{k}}$. Then, the $m$-linear form
\[
A_{0}\colon\ell_{\infty}^{n_{1}}\times\cdots\times\ell_{\infty}^{n_{m}%
}\longrightarrow\mathbb{C}%
\]
defined by%
\[
A_{0}\left(  x^{(1)},\dots,x^{(m)}\right)  =%
{\textstyle\sum\limits_{i_{1}=1}^{n_{1}}}
\cdots%
{\textstyle\sum\limits_{i_{m}=1}^{n_{m}}}
h_{i_{1}i_{2}}^{(2)}h_{i_{2}i_{3}}^{(3)}\cdots h_{i_{m-1}i_{m}}^{(m)}x_{i_{1}%
}^{(1)}\cdots x_{i_{m}}^{(m)}%
\]
has norm%
\[
\left\Vert A_{0}\right\Vert \leq n_{m}^{1/2}%
{\textstyle\prod\limits_{j=1}^{m}}
n_{j}^{1/2}\text{.}%
\]

\end{lemma}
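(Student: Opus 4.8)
The plan is to estimate $\Vert A_0 \Vert$ directly by evaluating the multilinear form on arbitrary unit vectors $x^{(1)}, \dots, x^{(m)} \in \ell_\infty$ and repeatedly applying the Cauchy--Schwarz inequality together with the orthogonality relations of the Hadamard matrices. First I would fix $x^{(k)} = (x^{(k)}_{i_k})_{i_k=1}^{n_k}$ with $\Vert x^{(k)} \Vert_\infty \le 1$ for each $k$, and rewrite
\[
A_0\left(x^{(1)},\dots,x^{(m)}\right) = \sum_{i_2=1}^{n_2}\cdots\sum_{i_m=1}^{n_m} h_{i_2 i_3}^{(3)}\cdots h_{i_{m-1}i_m}^{(m)} x_{i_2}^{(2)}\cdots x_{i_m}^{(m)} \left( \sum_{i_1=1}^{n_1} h_{i_1 i_2}^{(2)} x_{i_1}^{(1)} \right),
\]
so that the innermost sum is an inner product of the vector $x^{(1)}$ (extended by zeros to length $n_2$, using $n_1 \le n_2$) with the $i_2$-th column of $\mathbf{H}_{n_2}$.

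Next I would peel off the variables one at a time. Summing over $i_2$ first and applying Cauchy--Schwarz in that index, the key point is that the vector whose $i_2$-th entry is $\sum_{i_1} h_{i_1 i_2}^{(2)} x_{i_1}^{(1)}$ has squared $\ell_2$-norm equal to $\langle X^{(1)}, \mathbf{H}_{n_2}\mathbf{H}_{n_2}^\top X^{(1)} \rangle = n_2 \Vert X^{(1)} \Vert_2^2 \le n_2 n_1$, where $X^{(1)}$ is the zero-extension of $x^{(1)}$; this is exactly where the Hadamard orthogonality $\mathbf{H}_{n_k}\mathbf{H}_{n_k}^\top = n_k \mathbf{I}_{n_k}$ enters. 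Iterating this — at the step for index $i_k$ one picks up a factor $n_k^{1/2}$ from the Hadamard matrix $\mathbf{H}_{n_k}$ and a factor $n_{k-1}^{1/2}$ (or $n_k^{1/2}$ at the extremes) from the crude bound $\vert x^{(k)}_{i_k} \vert \le 1$ on the remaining free entries — the exponents should telescope to give $n_m^{1/2} \prod_{j=1}^m n_j^{1/2}$. The bookkeeping must be done carefully because the dimensions $n_1 \le n_2 \le \cdots \le n_m$ are unequal and the Hadamard matrices only exist for $k = 2, \dots, m$ (there is no hypothesis of a Hadamard matrix of order $n_1$), so the first variable $x^{(1)}$ is handled purely through its $\ell_2$-norm bound $\Vert x^{(1)} \Vert_2 \le n_1^{1/2}$ rather than through an orthogonality relation.

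The cleanest way to organize the iteration is probably by induction on $m$: one groups $A_0$ as a sum over $i_m$ of $h$-weighted products of an $(m-1)$-linear expression with $x^{(m)}_{i_m}$, applies Cauchy--Schwarz in $i_m$ to separate the $\ell_2$-norm of the $(m-1)$-linear part (over the $i_m$-indexed family) from $\Vert x^{(m)} \Vert_2 \le n_m^{1/2}$, then uses $\mathbf{H}_{n_m}\mathbf{H}_{n_m}^\top = n_m \mathbf{I}_{n_m}$ to collapse the sum over $i_m$ and reduce to an estimate on an $(m-1)$-linear form of the same shape with dimensions $n_1 \le \cdots \le n_{m-1}$, invoking the inductive hypothesis. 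I expect the main obstacle to be setting up the induction so that the \emph{shape} of the intermediate expression matches the inductive hypothesis exactly — in particular making sure the residual family indexed by the peeled variable is again of Hadamard-weighted multilinear type and that the exponent accounting $n_m^{1/2} \cdot n_m^{1/2} \cdot (\text{bound for } n_1 \le \cdots \le n_{m-1})$ combines to the claimed $n_m^{1/2}\prod_{j=1}^m n_j^{1/2}$; the analytic content (Cauchy--Schwarz plus $\mathbf{H}\mathbf{H}^\top = n\mathbf{I}$) is routine once the recursion is phrased correctly, and the base case $m=1$, where $A_0(x^{(1)}) = \sum_{i_1} x^{(1)}_{i_1}$ has norm $\le n_1 = n_1^{1/2} \cdot n_1^{1/2}$, is immediate.
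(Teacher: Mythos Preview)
Your plan is essentially the paper's: apply Cauchy--Schwarz in the outermost index $i_m$, expand the square, use the Hadamard row-orthogonality $\sum_{i_m} h^{(m)}_{i_{m-1} i_m} h^{(m)}_{j_{m-1} i_m} = n_m \delta_{i_{m-1} j_{m-1}}$ to collapse the cross terms, bound $|x^{(m-1)}_{i_{m-1}}|^2 \le 1$, and repeat down to $i_1$. The paper carries this out as a single direct iterative computation inside one square root (zero-padding everything to dimension $n_m$ first) rather than by induction, but the analytic content is identical.

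The one wrinkle in your inductive formulation, which you rightly anticipate, is this: after the first Cauchy--Schwarz plus orthogonality step you arrive at
\[
|A_0| \le n_m\Bigl(\textstyle\sum_{i_{m-1}}|x^{(m-1)}_{i_{m-1}}|^2\,|D_{i_{m-1}}|^2\Bigr)^{1/2},
\qquad
D_{i_{m-1}}=\textstyle\sum_{i_1,\dots,i_{m-2}}h^{(2)}_{i_1 i_2}\cdots h^{(m-1)}_{i_{m-2} i_{m-1}}x^{(1)}_{i_1}\cdots x^{(m-2)}_{i_{m-2}},
\]
and this is \emph{not} $|A_{m-1}(x^{(1)},\dots,x^{(m-1)})|$; inducting directly on $\|A_{m-1}\|$ would over-count by a factor $n_{m-1}^{1/2}$ at each step. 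The fix is to take as the inductive hypothesis the bound $\sum_{i_k}|D^{(k)}_{i_k}|^2\le n_1 n_2\cdots n_k$ on the $\ell_2$-norm of the partial-sum vector (equivalently, induct on the norm of the form with its last slot taken in $\ell_2^{n_k}$ rather than $\ell_\infty^{n_k}$). With that adjustment the recursion closes and the factor accounting $n_m^{1/2}\cdot n_m^{1/2}\cdot\prod_{j=1}^{m-1}n_j^{1/2}=n_m^{1/2}\prod_{j=1}^m n_j^{1/2}$ is correct. The paper sidesteps this bookkeeping by never closing the square root back to a form norm until the very end.
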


\begin{proof}
For $k=2,\ldots,m$, let $u_{i}^{\left(  k\right)  }$, $i=1,\ldots,n_{k}$, be
the rows of $\mathbf{H}_{n_{k}}$; hence
\begin{equation}
\left\langle u_{i}^{\left(  k\right)  },u_{j}^{\left(  k\right)
}\right\rangle =n_{k}\delta_{ij}\text{.} \label{M1}%
\end{equation}
Let us consider the square matrices of order $n_{m}$ defined by%
\[
\left[  h_{ij}^{\left(  k\right)  }\right]  _{n_{m}\times n_{m}}:=\left[
\begin{array}
[c]{cc}%
\mathbf{H}_{n_{k}} & \mathbf{0}_{n_{k}\times\left(  n_{m}-n_{k}\right)  }\\
\mathbf{0}_{\left(  n_{m}-n_{k}\right)  \times n_{k}} & \mathbf{0}_{\left(
n_{m}-n_{k}\right)  \times\left(  n_{m}-n_{k}\right)  }%
\end{array}
\right]
\]
for each $k=2,\ldots,m$. Define%
\[
A_{0}\colon\ell_{\infty}^{n_{1}}\times\cdots\times\ell_{\infty}^{n_{m}%
}\longrightarrow\mathbb{C}%
\]
by
\[
A_{0}\left(  x^{(1)},\dots,x^{(m)}\right)  =%
{\textstyle\sum\limits_{i_{1},\dots,i_{m}=1}^{n_{1},\ldots,n_{m}}}
h_{i_{1}i_{2}}^{(2)}h_{i_{2}i_{3}}^{(3)}\cdots h_{i_{m-1}i_{m}}^{(m)}x_{i_{1}%
}^{(1)}\cdots x_{i_{m}}^{(m)}\text{,}%
\]
and note that
\[
h_{i_{1}i_{2}}^{(2)}h_{i_{2}i_{3}}^{(3)}\cdots h_{i_{m-1}i_{m}}^{(m)}%
\in\left\{  -1,1\right\}
\]
whenever $i_{k}\in\{1,\ldots,n_{k}\}$ for all $k=1,\ldots,m$. For each
$k=1,\ldots,m$, given $x^{(k)}$ in the closed unit ball $B_{\ell_{\infty
}^{n_{k}}}$, let us denote
\[
y^{(k)}=\left(  x_{1}^{(k)},\ldots,x_{n_{k}}^{(k)},0,\ldots,0\right)  \in
B_{\ell_{\infty}^{n_{m}}}\text{.}%
\]
Then, by the H\"{o}lder inequality,%
\begin{align*}
&  \left\vert A_{0}\left(  x^{(1)},\dots,x^{(m)}\right)  \right\vert
=\left\vert
{\textstyle\sum\limits_{i_{1},\dots,i_{m}=1}^{n_{m}}}
h_{i_{1}i_{2}}^{(2)}h_{i_{2}i_{3}}^{(3)}\cdots h_{i_{m-1}i_{m}}^{(m)}y_{i_{1}%
}^{(1)}\cdots y_{i_{m}}^{(m)}\right\vert \\
&  \leq%
{\textstyle\sum\limits_{i_{m}=1}^{n_{m}}}
\left\vert
{\textstyle\sum\limits_{i_{1},\dots,i_{m-1}=1}^{n_{m}}}
h_{i_{1}i_{2}}^{(2)}h_{i_{2}i_{3}}^{(3)}\cdots h_{i_{m-1}i_{m}}^{(m)}y_{i_{1}%
}^{(1)}\cdots y_{i_{m-1}}^{(m-1)}\right\vert \left\vert y_{i_{m}}^{\left(
m\right)  }\right\vert \\
&  \leq\left(
{\textstyle\sum\limits_{i_{m}=1}^{n_{m}}}
|y_{i_{m}}^{(m)}|^{2}\right)  ^{1/2}\cdot\left(
{\textstyle\sum\limits_{i_{m}=1}^{n_{m}}}
\left\vert
{\textstyle\sum\limits_{i_{1},\dots,i_{m-1}=1}^{n_{m}}}
h_{i_{1}i_{2}}^{(2)}h_{i_{2}i_{3}}^{(3)}\cdots h_{i_{m-1}i_{m}}^{(m)}y_{i_{1}%
}^{(1)}\cdots y_{i_{m-1}}^{(m-1)}\right\vert ^{2}\right)  ^{1/2}\\
&  \leq n_{m}^{1/2}\left(
{\textstyle\sum\limits_{i_{m}=1}^{n_{m}}}
{\textstyle\sum\limits_{\substack{i_{1},\dots,i_{m-1}=1\\j_{1},\dots
,j_{m-1}=1}}^{n_{m}}}
h_{i_{1}i_{2}}^{(2)}h_{j_{1}j_{2}}^{(2)}\cdots h_{i_{m-2}i_{m-1}}%
^{(m-1)}h_{j_{m-2}j_{m-1}}^{(m-1)}h_{i_{m-1}i_{m}}^{(m)}h_{j_{m-1}i_{m}}%
^{(m)}y_{i_{1}}^{(1)}\overline{y_{j_{1}}^{(1)}}\cdots y_{i_{m-1}}%
^{(m-1)}\overline{y_{j_{m-1}}^{(m-1)}}\right)  ^{1/2}\text{.}%
\end{align*}
Thus,%
\begin{align*}
&  \left\vert A_{0}\left(  x^{(1)},\dots,x^{(m)}\right)  \right\vert \\
&  \leq n_{m}^{1/2}\left(
{\textstyle\sum\limits_{\substack{i_{1},\dots,i_{m-1}=1\\j_{1},\dots
,j_{m-1}=1}}^{n_{m}}}
h_{i_{1}i_{2}}^{(2)}h_{j_{1}j_{2}}^{(2)}\cdots h_{i_{m-2}i_{m-1}}%
^{(m-1)}h_{j_{m-2}j_{m-1}}^{(m-1)}y_{i_{1}}^{(1)}\overline{y_{j_{1}}^{(1)}%
}\cdots y_{i_{m-1}}^{(m-1)}\overline{y_{j_{m-1}}^{(m-1)}}%
{\textstyle\sum\limits_{i_{m}=1}^{n_{m}}}
h_{i_{m-1}i_{m}}^{(m)}h_{j_{m-1}i_{m}}^{(m)}\right)  ^{1/2}\\
&  =n_{m}^{1/2}\left(
{\textstyle\sum\limits_{\substack{i_{1},\dots,i_{m-1}=1\\j_{1},\dots
,j_{m-1}=1}}^{n_{m}}}
h_{i_{1}i_{2}}^{(2)}h_{j_{1}j_{2}}^{(2)}\cdots h_{i_{m-2}i_{m-1}}%
^{(m-1)}h_{j_{m-2}j_{m-1}}^{(m-1)}y_{i_{1}}^{(1)}\overline{y_{j_{1}}^{(1)}%
}\cdots y_{i_{m-1}}^{(m-1)}\overline{y_{j_{m-1}}^{(m-1)}}\left\langle
u_{i_{m-1}}^{\left(  m\right)  },u_{j_{m-1}}^{\left(  m\right)  }\right\rangle
\right)  ^{1/2}%
\end{align*}
and, by (\ref{M1}), we have
\begin{align*}
&  \left\vert A_{0}\left(  x^{(1)},\dots,x^{(m)}\right)  \right\vert \\
&  \leq n_{m}^{1/2}\left(
{\textstyle\sum\limits_{i_{m-1}=1}^{n_{m}}}
{\textstyle\sum\limits_{\substack{i_{1},\dots,i_{m-2}=1\\j_{1},\dots
,j_{m-2}=1}}^{n_{m}}}
h_{i_{1}i_{2}}^{(2)}h_{j_{1}j_{2}}^{(2)}\cdots h_{i_{m-2}i_{m-1}}%
^{(m-1)}h_{j_{m-2}i_{m-1}}^{(m-1)}y_{i_{1}}^{(1)}\overline{y_{j_{1}}^{(1)}%
}\cdots y_{i_{m-2}}^{(m-2)}\overline{y_{j_{m-2}}^{(m-2)}}\left\vert
y_{i_{m-1}}^{(m-1)}\right\vert ^{2}n_{m}\right)  ^{1/2}\\
&  \leq n_{m}\left(
{\textstyle\sum\limits_{\substack{i_{1},\dots,i_{m-2}=1\\j_{1},\dots
,j_{m-2}=1}}^{n_{m}}}
h_{i_{1}i_{2}}^{(2)}h_{j_{1}j_{2}}^{(2)}\cdots h_{i_{m-3}i_{m-2}}%
^{(m-2)}h_{j_{m-3}j_{m-2}}^{(m-2)}y_{i_{1}}^{(1)}\overline{y_{j_{1}}^{(1)}%
}\cdots y_{i_{m-2}}^{(m-2)}\overline{y_{j_{m-2}}^{(m-2)}}%
{\textstyle\sum\limits_{i_{m-1}=1}^{n_{m}}}
h_{i_{m-2}i_{m-1}}^{(m-1)}h_{j_{m-2}i_{m-1}}^{(m-1)}\right)  ^{1/2}\\
&  =n_{m}\left(
{\textstyle\sum\limits_{\substack{i_{1},\dots,i_{m-2}=1\\j_{1},\dots
,j_{m-2}=1}}^{n_{m}}}
h_{i_{1}i_{2}}^{(2)}h_{j_{1}j_{2}}^{(2)}\cdots h_{i_{m-3}i_{m-2}}%
^{(m-2)}h_{j_{m-3}j_{m-2}}^{(m-2)}y_{i_{1}}^{(1)}\overline{y_{j_{1}}^{(1)}%
}\cdots y_{i_{m-2}}^{(m-2)}\overline{y_{j_{m-2}}^{(m-2)}}\left\langle
u_{i_{m-2}}^{\left(  m-1\right)  },u_{j_{m-2}}^{\left(  m-1\right)
}\right\rangle \right)  ^{1/2}\text{.}%
\end{align*}
Since%
\begin{align*}
&  \left(
{\textstyle\sum\limits_{\substack{i_{1},\dots,i_{m-2}=1\\j_{1},\dots
,j_{m-2}=1}}^{n_{m}}}
h_{i_{1}i_{2}}^{(2)}h_{j_{1}j_{2}}^{(2)}\cdots h_{i_{m-3}i_{m-2}}%
^{(m-2)}h_{j_{m-3}j_{m-2}}^{(m-2)}y_{i_{1}}^{(1)}\overline{y_{j_{1}}^{(1)}%
}\cdots y_{i_{m-2}}^{(m-2)}\overline{y_{j_{m-2}}^{(m-2)}}\left\langle
u_{i_{m-2}}^{\left(  m-1\right)  },u_{j_{m-2}}^{\left(  m-1\right)
}\right\rangle \right)  ^{1/2}\\
&  =n_{m-1}^{1/2}\left(
{\textstyle\sum\limits_{i_{m-2}=1}^{n_{m}}}
{\textstyle\sum\limits_{\substack{i_{1},\dots,i_{m-3}=1\\j_{1},\dots
,j_{m-3}=1}}^{n_{m}}}
h_{i_{1}i_{2}}^{(2)}h_{j_{1}j_{2}}^{(2)}\cdots h_{i_{m-3}i_{m-2}}%
^{(m-2)}h_{j_{m-3}i_{m-2}}^{(m-2)}y_{i_{1}}^{(1)}\overline{y_{j_{1}}^{(1)}%
}\cdots y_{i_{m-3}}^{(m-3)}\overline{y_{j_{m-3}}^{(m-3)}}y_{i_{m-2}}%
^{(m-2)}\overline{y_{i_{m-2}}^{(m-2)}}\right)  ^{1/2}\\
&  =n_{m-1}^{1/2}\left(
{\textstyle\sum\limits_{i_{m-2}=1}^{n_{m}}}
|y_{i_{m-2}}^{(m-2)}|^{2}%
{\textstyle\sum\limits_{\substack{i_{1},\dots,i_{m-3}=1\\j_{1},\dots
,j_{m-3}=1}}^{n_{m}}}
h_{i_{1}i_{2}}^{(2)}h_{j_{1}j_{2}}^{(2)}\cdots h_{i_{m-3}i_{m-2}}%
^{(m-2)}h_{j_{m-3}i_{m-2}}^{(m-2)}y_{i_{1}}^{(1)}\overline{y_{j_{1}}^{(1)}%
}\cdots y_{i_{m-3}}^{(m-3)}\overline{y_{j_{m-3}}^{(m-3)}}\right)
^{1/2}\text{,}%
\end{align*}
we have%
\begin{align*}
&  \left\vert A_{0}\left(  x^{(1)},\dots,x^{(m)}\right)  \right\vert \\
&  \leq n_{m}n_{m-1}^{1/2}\left(
{\textstyle\sum\limits_{i_{m-2}=1}^{n_{m}}}
{\textstyle\sum\limits_{\substack{i_{1},\dots,i_{m-3}=1\\j_{1},\dots
,j_{m-3}=1}}^{n_{m}}}
h_{i_{1}i_{2}}^{(2)}h_{j_{1}j_{2}}^{(2)}\cdots h_{i_{m-3}i_{m-2}}%
^{(m-2)}h_{j_{m-3}i_{m-2}}^{(m-2)}y_{i_{1}}^{(1)}\overline{y_{j_{1}}^{(1)}%
}\cdots y_{i_{m-3}}^{(m-3)}\overline{y_{j_{m-3}}^{(m-3)}}\right)  ^{1/2}%
\end{align*}
and, repeating this procedure, we finally obtain%
\begin{align*}
\left\vert A_{0}\left(  x^{(1)},\dots,x^{(m)}\right)  \right\vert  &  \leq
n_{m}n_{m-1}^{1/2}\cdots n_{2}^{1/2}\left(
{\textstyle\sum\limits_{i_{1}=1}^{n_{m}}}
\left\vert y_{i_{1}}^{(1)}\right\vert ^{2}\right)  ^{1/2}\\
&  \leq n_{m}^{1/2}%
{\textstyle\prod\limits_{j=1}^{m}}
n_{j}^{1/2}\text{.}%
\end{align*}

\end{proof}

Now we are able to prove Theorem \ref{090}.

\begin{proof}
[Proof of Theorem \ref{090}]It sufficies to consider $\mathbb{K}=\mathbb{C}$;
the case of real scalars is a straightforward consequence. We shall divide the
proof into three steps. Let $n_{1},\ldots,n_{m}$ be positive integers.

\bigskip

\noindent\textbf{First Step. }(See \cite[p. 295]{JSpencer}) Given $\delta>0$,
there exists a positive integer $N$ such that, for all $n_{k}>N$ there is a
Hadamard order $t_{k}$ such that
\begin{equation}
n_{k}\leq t_{k}<\left(  1+\delta\right)  n_{k}\text{, \ \ \ \ }k=1,\ldots
,m\text{.} \label{M2}%
\end{equation}
As commented in \cite[p. 295]{JSpencer}, it suffices to observe that there are
Hadamard matrices of order $4^{i}12^{j}$ for all $i,j$. Since the sequence
defined recursively by%
\[
x_{1}=1\text{ \ \ \ \ and \ \ \ \ }x_{n+1}=\min\left\{  \left\{  4^{i}%
12^{j}:\left(  i,j\right)  \in\left\{  0,1,2,\ldots\right\}  \times\left\{
0,1,2,\ldots\right\}  \right\}  \backslash\left\{  x_{1},\ldots,x_{n}\right\}
\right\}
\]
is such that $\lim\limits_{n\rightarrow\infty}\dfrac{x_{n+1}}{x_{n}}=1$, a
straightforward calculation assures (\ref{M2}).%

\begin{figure}[H]
\centering\includegraphics[scale=0.8]{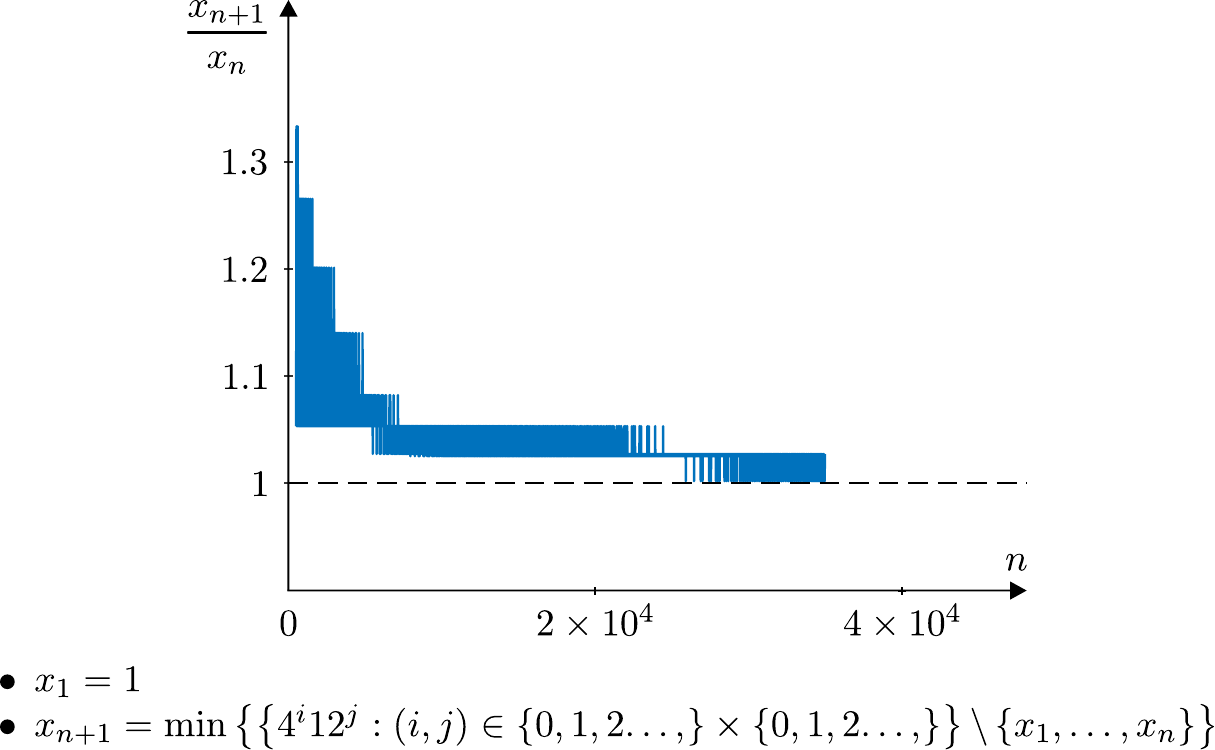}\caption{Graphic of
$\frac{x_{n+1}}{x_{n}}$}
\end{figure}%

\bigskip

\noindent\textbf{Second Step}. Notice that we can assume, without loss of
generality, $t_{1}\leq\cdots\leq t_{m}$. It follows from Lemma \ref{LemaM1}
that, for the integers $t_{1},\ldots,t_{m}$ of the first step, there exists an
$m$-linear form $A_{0}\colon\ell_{\infty}^{t_{1}}\times\cdots\times
\ell_{\infty}^{t_{m}}\longrightarrow\mathbb{C}$ with coefficients $\pm1$ such
that%
\[
\left\Vert A_{0}\right\Vert \leq t_{m}^{1/2}%
{\textstyle\prod\limits_{k=1}^{m}}
t_{k}^{1/2}\text{.}%
\]

\bigskip

\noindent\textbf{Third Step. }If $n_{k}=t_{k}$ for all $k$, we just consider
$A_{n_{1},\ldots,n_{m}}=A_{0}$. Otherwise, we define%
\[
A_{n_{1},\ldots,n_{m}}\colon\ell_{\infty}^{n_{1}}\times\cdots\times
\ell_{\infty}^{n_{m}}\longrightarrow\mathbb{C}%
\]
by%
\[
A_{n_{1},\ldots,n_{m}}\left(  \left(  z_{i_{1}}^{\left(  1\right)  }\right)
_{i_{1}=1}^{n_{1}},\ldots,\left(  z_{i_{m}}^{\left(  m\right)  }\right)
_{i_{m}=1}^{n_{m}}\right)  =A_{0}\left(  \left(  z_{1}^{\left(  1\right)
},\ldots,z_{n_{1}}^{\left(  1\right)  },0,\ldots,0\right)  ,\ldots,\left(
z_{1}^{\left(  m\right)  },\ldots,z_{n_{m}}^{\left(  m\right)  }%
,0,\ldots,0\right)  \right)
\]
Then, given $x^{\left(  k\right)  }\in B_{\ell_{\infty}^{n_{k}}}$,
$k=1,\ldots,m$, we have%
\begin{align*}
&  \left\vert A_{n_{1},\ldots,n_{m}}\left(  \left(  x_{i_{1}}^{\left(
1\right)  }\right)  _{i_{1}=1}^{n_{1}},\ldots,\left(  x_{i_{m}}^{\left(
m\right)  }\right)  _{i_{m}=1}^{n_{m}}\right)  \right\vert \\
&  =\left\vert A_{0}\left(  \left(  x_{1}^{\left(  1\right)  },\ldots
,x_{n_{1}}^{\left(  1\right)  },0,\ldots,0\right)  ,\ldots,\left(
x_{1}^{\left(  m\right)  },\ldots,x_{n_{m}}^{\left(  m\right)  }%
,0,\ldots,0\right)  \right)  \right\vert \\
&  \leq\left\Vert A_{0}\right\Vert \leq t_{m}^{1/2}%
{\textstyle\prod\limits_{k=1}^{m}}
t_{k}^{1/2}\\
&  \leq\left(  1+\delta\right)  ^{\frac{m+1}{2}}\max\left\{  n_{1}%
^{1/2},\ldots,n_{m}^{1/2}\right\}
{\textstyle\prod\limits_{j=1}^{m}}
n_{j}^{1/2}%
\end{align*}
and this ends the proof.
\end{proof}

\begin{remark}
A famous conjecture of Hadamard proposes that there exists a Hadamard matrix
of order $4k$ for every positive integer $k$. If this is true or, at least, if
the gap between two consecutive orders of Hadamard matrices is bounded by a
universal constant, a straightforward modification of our proof shows that
there exists an $m$-linear form $A_{n}\colon\ell_{\infty}^{n}\times
\cdots\times\ell_{\infty}^{n}\longrightarrow\mathbb{K}$ of the type
\[
A_{n}(z^{(1)},\ldots,z^{(m)})=%
{\textstyle\sum\limits_{j_{1}=1}^{n}}
\cdots%
{\textstyle\sum\limits_{j_{m}=1}^{n}}
\pm z_{j_{1}}^{(1)}\cdots z_{j_{m}}^{(m)}\text{,}%
\]
such that%
\[
\left\Vert A_{n}\right\Vert \leq\left(  1+O\left(  n^{-1/2}\right)  \right)
n^{\frac{m+1}{2}}\text{.}%
\]

\end{remark}

\begin{corollary}
Let a positive integer $m$ and $\epsilon>0$ be given. There exists a positive
integer $N$ such that, for all $n>N$, there exists an $m$-linear form
$A_{n}\colon\ell_{\infty}^{n}\times\cdots\times\ell_{\infty}^{n}%
\longrightarrow\mathbb{K}$ of the type
\[
A_{n}(z^{(1)},\ldots,z^{(m)})=%
{\textstyle\sum\limits_{j_{1}=1}^{n}}
\cdots%
{\textstyle\sum\limits_{j_{m}=1}^{n}}
\pm z_{j_{1}}^{(1)}\cdots z_{j_{m}}^{(m)}\text{,}%
\]
satisfying
\[
\Vert A_{n}\Vert\leq\left(  1+\epsilon\right)  n^{\frac{m+1}{2}}\text{.}%
\]

\end{corollary}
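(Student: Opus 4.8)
The plan is to obtain this corollary as a direct specialization of Theorem \ref{090} to the diagonal case $n_{1}=\cdots=n_{m}=n$. Given the positive integer $m$ and $\epsilon>0$, I would apply Theorem \ref{090} with this same $m$ and $\epsilon$ to produce a positive integer $N$ enjoying the stated property for all $n_{1},\dots,n_{m}>N$.

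Then, for an arbitrary $n>N$, I would choose $n_{1}=\cdots=n_{m}=n$. Theorem \ref{090} supplies an $m$-linear form $A_{n,\dots,n}\colon\ell_{\infty}^{n}\times\cdots\times\ell_{\infty}^{n}\longrightarrow\mathbb{K}$ of the prescribed unimodular type with
\[
\Vert A_{n,\dots,n}\Vert\leq\left(1+\epsilon\right)\max\left\{n^{1/2},\ldots,n^{1/2}\right\}{\textstyle\prod\limits_{j=1}^{m}}n^{1/2}.
\]
Since $\max\{n^{1/2},\ldots,n^{1/2}\}=n^{1/2}$ and ${\textstyle\prod_{j=1}^{m}}n^{1/2}=n^{m/2}$, the right-hand side equals $\left(1+\epsilon\right)n^{(m+1)/2}$. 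Writing $A_{n}:=A_{n,\dots,n}$ then gives the asserted bound, and $A_{n}$ is of the required type because it is already presented in that form by Theorem \ref{090}.

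There is no genuine obstacle here: the statement is merely the instantiation of Theorem \ref{090} at equal dimensions, and the only point to check — that the diagonal form keeps the prescribed $\pm$-coefficient shape — is immediate. I would also note in passing that, conditional on Hadamard's conjecture (or on a uniform bound for the gaps between consecutive Hadamard orders), the Remark preceding the statement yields the quantitatively stronger conclusion $\Vert A_{n}\Vert\leq\left(1+O\!\left(n^{-1/2}\right)\right)n^{(m+1)/2}$, which sharpens the corollary.
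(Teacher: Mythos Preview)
Your proposal is correct and matches the paper's approach: the paper presents this corollary as an immediate consequence of Theorem~\ref{090} (stated with no proof), obtained precisely by specializing to $n_{1}=\cdots=n_{m}=n$ as you do. Your additional remark about the conditional $1+O(n^{-1/2})$ bound also aligns with the paper's own Remark preceding the corollary.
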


\section{Constants of Bennett's inequality}

In this section we shall show that, in most of the cases (see Figure 2), the
constants of Bennett's inequality (\ref{I3}) are \textquotedblleft
uniformly\textquotedblright\ asymptotically bounded by $1$ and, besides, the
constant $1$ will be shown to be sharp in some cases (see Corollary
\ref{op71}). We begin with a multipurpose simple lemma that shall be also used
in the next section. For the sake of simplicity, we use the notation
$\mathbb{N}_{k}$ to represent the set $\{k,k+1,k+2,\ldots\}.$

\begin{lemma}
\label{LemaB1}Let $k,m\in\mathbb{N}$ and let $f\colon\mathbb{N}_{k}%
\longrightarrow\lbrack0,\infty)$ be an increasing function. If for all
$n\in\mathbb{N}_{k}$ there exists an $m$-linear form $A_{0}\colon\ell_{2}%
^{n}\times\cdots\times\ell_{2}^{n}\longrightarrow\mathbb{K}$ with coefficients
$\pm1$ such that%
\[
\left\Vert A_{0}\right\Vert \leq f\left(  n\right)  \text{,}%
\]
then, for all $p_{1},\ldots,p_{m}\in\left[  2,\infty\right]  $ and all
$n_{1},\ldots,n_{m}\in\mathbb{N}_{k}$ there is an $m$-linear form $A\colon
\ell_{p_{1}}^{n_{1}}\times\cdots\times\ell_{p_{m}}^{n_{m}}\longrightarrow
\mathbb{K}$ with coefficients $\pm1$ such that%
\[
\left\Vert A\right\Vert \leq\max\left\{  f\left(  n_{1}\right)  ,\ldots
,f\left(  n_{m}\right)  \right\}  n_{1}^{\frac{1}{2}-\frac{1}{p_{1}}}\cdots
n_{m}^{\frac{1}{2}-\frac{1}{p_{m}}}\text{.}%
\]

\end{lemma}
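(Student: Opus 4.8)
The plan is to compare the $\ell_{p_k}^{n_k}$-norm with the $\ell_2^{n_k}$-norm on $\mathbb{K}^{n_k}$ and transport the hypothesis along this comparison. First I would recall the elementary inclusion inequality: for $p\in[2,\infty]$ and $x\in\mathbb{K}^{n}$ one has $\Vert x\Vert_2\le n^{\frac12-\frac1p}\Vert x\Vert_p$ (equivalently, the formal identity map $\ell_p^n\to\ell_2^n$ has norm $n^{\frac12-\frac1p}$; this follows from Hölder applied to $\sum|x_i|^2=\sum|x_i|^2\cdot 1$ with exponents $p/2$ and its conjugate, and it is an equality on the constant vector). Consequently, if $x^{(k)}\in B_{\ell_{p_k}^{n_k}}$ then $x^{(k)}/\bigl(n_k^{\frac12-\frac1{p_k}}\bigr)\in B_{\ell_2^{n_k}}$.

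Next, fix $n_1,\dots,n_m\in\mathbb{N}_k$ and set $n:=\max\{n_1,\dots,n_m\}$, so that $n\in\mathbb{N}_k$ as well. By hypothesis there is an $m$-linear form $A_0\colon\ell_2^n\times\cdots\times\ell_2^n\to\mathbb{K}$ with coefficients $\pm1$ and $\Vert A_0\Vert\le f(n)$. I would define $A\colon\ell_{p_1}^{n_1}\times\cdots\times\ell_{p_m}^{n_m}\to\mathbb{K}$ by zero-padding each argument up to length $n$ and then applying $A_0$, exactly as in the Third Step of the proof of Theorem \ref{090}; the coefficients of $A$ are a subset of those of $A_0$, hence still $\pm1$, and $A$ has the required monomial form. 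Then for $x^{(k)}\in B_{\ell_{p_k}^{n_k}}$, multilinearity and the scaling above give
\[
\bigl|A(x^{(1)},\dots,x^{(m)})\bigr|
=\Bigl(\prod_{k=1}^{m}n_k^{\frac12-\frac1{p_k}}\Bigr)\,
\bigl|A_0(\tilde x^{(1)},\dots,\tilde x^{(m)})\bigr|
\le f(n)\,\prod_{k=1}^{m}n_k^{\frac12-\frac1{p_k}},
\]
where $\tilde x^{(k)}$ is the zero-padded normalization of $x^{(k)}$, which lies in $B_{\ell_2^n}$. Finally, since $f$ is increasing and $n=\max\{n_1,\dots,n_m\}$, we have $f(n)=\max\{f(n_1),\dots,f(n_m)\}$, which yields the stated bound on $\Vert A\Vert$.

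There is no real obstacle here; the only point requiring a little care is the bookkeeping in the zero-padding step, namely checking that after padding to the common length $n$ each normalized argument genuinely lands in the unit ball of $\ell_2^n$ (the padded coordinates are zero, so the $\ell_2$-norm is unchanged) and that the resulting form still has the prescribed $\pm1$-monomial shape with index ranges $n_1,\dots,n_m$. Everything else is the one-line Hölder inclusion $\ell_p^n\hookrightarrow\ell_2^n$ together with multilinear homogeneity and monotonicity of $f$.
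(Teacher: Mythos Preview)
Your argument is correct and matches the paper's proof essentially step for step: both set $n=\max\{n_1,\dots,n_m\}$, invoke the hypothesis to obtain $A_0$ on $(\ell_2^n)^m$, restrict by zero-padding, and then pass from $\ell_{p_k}^{n_k}$ to $\ell_2^{n_k}$ via the H\"older inclusion $\Vert x\Vert_2\le n_k^{1/2-1/p_k}\Vert x\Vert_{p_k}$, finishing with monotonicity of $f$. The paper separates the zero-padding and the $\ell_{p_k}\hookrightarrow\ell_2$ steps into two intermediate forms $A_1$ and $A$, whereas you combine them; this is a purely cosmetic difference.
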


\begin{proof}
Let $n=\max\left\{  n_{1},\ldots,n_{m}\right\}  $. From our hypothesis, there
is an $m$-linear form $A_{0}\colon\ell_{2}^{n}\times\cdots\times\ell_{2}%
^{n}\longrightarrow\mathbb{K}$ of the type%
\[
A_{0}\left(  x^{\left(  1\right)  },\ldots,x^{\left(  m\right)  }\right)  =%
{\textstyle\sum\limits_{i_{1}=1}^{n}}
\cdots%
{\textstyle\sum\limits_{i_{m}=1}^{n}}
\pm x_{i_{1}}^{\left(  1\right)  }\cdots x_{i_{m}}^{\left(  m\right)  }%
\]
such that%
\[
\left\Vert A_{0}\right\Vert \leq f\left(  n\right)  \text{.}%
\]
Let us consider the $m$-linear form $A_{1}\colon\ell_{2}^{n_{1}}\times
\cdots\times\ell_{2}^{n_{m}}\longrightarrow\mathbb{K}$ defined by%
\[
A_{1}\left(  x^{\left(  1\right)  },\ldots,x^{\left(  m\right)  }\right)
=A_{0}\left(  \left(  x_{1}^{\left(  1\right)  },\ldots,x_{n_{1}}^{\left(
1\right)  },0,\ldots,0\right)  ,\ldots,\left(  x_{1}^{\left(  m\right)
},\ldots,x_{n_{m}}^{\left(  m\right)  },0,\ldots,0\right)  \right)  \text{.}%
\]
It is plain that%
\begin{equation}
\left\Vert A_{1}\right\Vert \leq\left\Vert A_{0}\right\Vert \leq f\left(
n\right)  =\max\left\{  f\left(  n_{1}\right)  ,\ldots,f\left(  n_{m}\right)
\right\}  \text{.} \label{B1}%
\end{equation}
Now, let us define $A\colon\ell_{p_{1}}^{n_{1}}\times\cdots\times\ell_{p_{m}%
}^{n_{m}}\longrightarrow\mathbb{K}$ by%
\[
A\left(  x^{\left(  1\right)  },\ldots,x^{\left(  m\right)  }\right)
=A_{1}\left(  x^{\left(  1\right)  },\ldots,x^{\left(  m\right)  }\right)
\text{.}%
\]
In this case,%
\begin{align}
\left\Vert A\right\Vert  &  =\sup_{\left\Vert x^{\left(  k\right)
}\right\Vert _{p_{k}}\leq1}\left\vert
{\textstyle\sum\limits_{i_{1}=1}^{n_{1}}}
\cdots%
{\textstyle\sum\limits_{i_{m}=1}^{n_{m}}}
\pm x_{i_{1}}^{\left(  1\right)  }\cdots x_{i_{m}}^{\left(  m\right)
}\right\vert \nonumber\\
&  =n_{1}^{\frac{1}{2}-\frac{1}{p_{1}}}\cdots n_{m}^{\frac{1}{2}-\frac
{1}{p_{m}}}\sup_{\left\Vert x^{\left(  k\right)  }\right\Vert _{p_{k}}\leq
1}\left\vert
{\textstyle\sum\limits_{i_{1}=1}^{n_{1}}}
\cdots%
{\textstyle\sum\limits_{i_{m}=1}^{n_{m}}}
\pm\frac{x_{i_{1}}^{\left(  1\right)  }}{n_{1}^{\frac{1}{2}-\frac{1}{p_{1}}}%
}\cdots\frac{x_{i_{m}}^{\left(  m\right)  }}{n_{m}^{\frac{1}{2}-\frac{1}%
{p_{m}}}}\right\vert \text{.} \label{B2}%
\end{align}
For all $k=1,\ldots,m$, let $z^{\left(  k\right)  }=n_{k}^{\frac{1}{p_{k}%
}-\frac{1}{2}}x^{\left(  k\right)  }$. By the H\"{o}lder inequality, since
$\left\Vert x^{\left(  k\right)  }\right\Vert _{p_{k}}\leq1$, we have
\[
\left\Vert z^{\left(  k\right)  }\right\Vert _{2}\leq1\text{.}%
\]
It follows from (\ref{B1}) and (\ref{B2}) that
\begin{align*}
\left\Vert A\right\Vert  &  \leq n_{1}^{\frac{1}{2}-\frac{1}{p_{1}}}\cdots
n_{m}^{\frac{1}{2}-\frac{1}{p_{m}}}\sup_{\left\Vert z^{\left(  k\right)
}\right\Vert _{2}\leq1}\left\vert
{\textstyle\sum\limits_{i_{1}=1}^{n_{1}}}
\cdots%
{\textstyle\sum\limits_{i_{m}=1}^{n_{m}}}
\pm z_{i_{1}}^{\left(  1\right)  }\cdots z_{i_{m}}^{\left(  m\right)
}\right\vert \\
&  =\left\Vert A_{1}\right\Vert n_{1}^{\frac{1}{2}-\frac{1}{p_{1}}}\cdots
n_{m}^{\frac{1}{2}-\frac{1}{p_{m}}}\\
&  \leq\max\left\{  f\left(  n_{1}\right)  ,\ldots,f\left(  n_{m}\right)
\right\}  n_{1}^{\frac{1}{2}-\frac{1}{p_{1}}}\cdots n_{m}^{\frac{1}{2}%
-\frac{1}{p_{m}}}\text{.}%
\end{align*}

\end{proof}

Now we prove another lemma which plays a fundamental role in this section.

\begin{lemma}
Let $\epsilon>0$. There exists a positive integer $N$ $\mathrm{(}$depending
just on $\epsilon\mathrm{)}$ such that, whenever $n>N$, there is a bilinear
form $A\colon\ell_{2}^{n}\times\ell_{2}^{n}\longrightarrow\mathbb{K}$ of the
type
\[
A\left(  z^{\left(  1\right)  },z^{\left(  2\right)  }\right)  =%
{\textstyle\sum\limits_{i=1}^{n}}
{\textstyle\sum\limits_{j=1}^{n}}
\pm z_{i}^{\left(  1\right)  }z_{j}^{\left(  2\right)  }\text{,}%
\]
such that%
\[
\Vert A\Vert\leq\left(  1+\epsilon\right)  n^{1/2}\text{.}%
\]

\end{lemma}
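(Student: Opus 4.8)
The plan is to exploit the fact that a bilinear form $A\colon\ell_2^n\times\ell_2^n\to\mathbb{K}$ whose coefficient matrix is $\mathbf{M}=[\varepsilon_{ij}]$ has norm equal to the largest singular value of $\mathbf{M}$, i.e. $\|A\|=\|\mathbf{M}\|_{\mathrm{op}}=\sqrt{\lambda_{\max}(\mathbf{M}\mathbf{M}^{\top})}$. For an $n\times n$ Hadamard matrix $\mathbf{H}$ we have $\mathbf{H}\mathbf{H}^{\top}=n\mathbf{I}_n$, so the associated bilinear form has norm exactly $n^{1/2}$. Thus, as in the proof of Theorem~\ref{090}, the strategy is: first pass to a nearby Hadamard order, realize the sharp bound there, and then truncate back down to dimension $n$, losing only the factor coming from the gap between consecutive Hadamard orders.

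Concretely, I would proceed in three steps mirroring the proof of Theorem~\ref{090}. First, invoke the First Step of that proof (the argument on \cite[p.~295]{JSpencer} using Hadamard matrices of order $4^i12^j$ together with $\lim x_{n+1}/x_n=1$): given $\epsilon>0$, choose $\delta>0$ with $(1+\delta)^{1/2}\le 1+\epsilon$, and then a positive integer $N$ (depending only on $\epsilon$) such that for every $n>N$ there is a Hadamard order $t$ with $n\le t<(1+\delta)n$. Second, let $\mathbf{H}_t$ be a Hadamard matrix of order $t$ and let $B_0\colon\ell_\infty^t\times\ell_\infty^t\to\mathbb{K}$ — equivalently $\ell_2^t\times\ell_2^t$ for the norm computation — be the bilinear form with coefficient matrix $\mathbf{H}_t$; since $\mathbf{H}_t\mathbf{H}_t^{\top}=t\mathbf{I}_t$, the operator norm of $\mathbf{H}_t$ on $\ell_2^t$ is $t^{1/2}$, hence $\|B_0\|\le t^{1/2}$ on $\ell_2^t\times\ell_2^t$. (Alternatively one can simply apply Lemma~\ref{LemaM1} with $m=2$, $n_1=n_2=t$, which gives $\|B_0\|\le t^{1/2}\cdot t^{1/2}\cdot t^{1/2}$ in the $\ell_\infty$ setting; but for the $\ell_2$ bound one wants the singular-value computation directly, giving the sharper $t^{1/2}$.) Third, define $A\colon\ell_2^n\times\ell_2^n\to\mathbb{K}$ by restricting $B_0$ to the first $n$ coordinates in each variable:
\[
A\left(z^{(1)},z^{(2)}\right)=B_0\left(\left(z_1^{(1)},\ldots,z_n^{(1)},0,\ldots,0\right),\left(z_1^{(2)},\ldots,z_n^{(2)},0,\ldots,0\right)\right),
\]
which has coefficient matrix the upper-left $n\times n$ block of $\mathbf{H}_t$, so its entries are still $\pm1$ and it has the required form. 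Since the unit ball of $\ell_2^n$ embeds isometrically into that of $\ell_2^t$ under zero-padding, $\|A\|\le\|B_0\|\le t^{1/2}<(1+\delta)^{1/2}n^{1/2}\le(1+\epsilon)n^{1/2}$, as desired.

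The only real subtlety — and it is minor — is making sure the singular-value normalization is right: one must be careful that the relevant quantity is $\|\mathbf{H}_t\|_{\mathrm{op}}$ on $\ell_2$, which is $\sqrt{t}$ (not $t$), and that restriction to a coordinate subspace only decreases the operator norm. Everything else is the same Hadamard-order-approximation bookkeeping already used in Theorem~\ref{090}, with the mild improvement that working in $\ell_2$ rather than $\ell_\infty$ removes two of the $t^{1/2}$ factors and leaves exactly one, which is what makes the constant collapse to $1+\epsilon$ times $n^{1/2}$. I would also note that $N$ depends only on $\epsilon$ (not on anything else), since the Hadamard-gap estimate in the First Step is uniform; this uniformity is what the subsequent application to Bennett's inequality via Lemma~\ref{LemaB1} will need.
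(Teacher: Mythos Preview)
Your proof is correct and follows essentially the same route as the paper: use the Hadamard-order approximation from the First Step of Theorem~\ref{090} to find $t$ with $n\le t<(1+\delta)n$, observe that the Hadamard bilinear form on $\ell_2^t\times\ell_2^t$ has norm $t^{1/2}$, and restrict by zero-padding. The only cosmetic difference is that the paper verifies $\|A_0\|\le t^{1/2}$ by an explicit Cauchy--Schwarz computation rather than invoking the singular-value identity $\|A\|=\sqrt{\lambda_{\max}(\mathbf{H}\mathbf{H}^\top)}$, but this is the same content.
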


\begin{proof}
It suffices to consider $\mathbb{K}=\mathbb{C}$. Let $\left[  h_{ij}\right]
_{n\times n}$ be a Hadamard matrix of order $n$. It is simple to show that the
bilinear form $A_{0}\colon\ell_{2}^{n}\times\ell_{2}^{n}\longrightarrow
\mathbb{C}$ given by%
\begin{equation}
A_{0}(z^{(1)},z^{(2)})=%
{\textstyle\sum\limits_{i=1}^{n}}
{\textstyle\sum\limits_{j=1}^{n}}
h_{ij}z_{i}^{(1)}z_{j}^{(2)}\text{,} \label{B3}%
\end{equation}
has norm%
\[
\Vert A_{0}\Vert\leq n^{1/2}\text{.}%
\]
In fact, if $x^{\left(  1\right)  },x^{\left(  2\right)  }\in B_{\ell_{2}^{n}%
}$, by the Cauchy-Schwarz inequality, we have%
\begin{align*}
\left\vert A_{0}\left(  x^{\left(  1\right)  },x^{\left(  2\right)  }\right)
\right\vert  &  \leq%
{\textstyle\sum\limits_{j=1}^{n}}
\left\vert
{\textstyle\sum\limits_{i=1}^{n}}
h_{ij}x_{i}^{(1)}\right\vert \left\vert x_{j}^{\left(  2\right)  }\right\vert
\\
&  \leq\left(
{\textstyle\sum\limits_{j=1}^{n}}
\left\vert x_{j}^{(2)}\right\vert ^{2}\right)  ^{1/2}\cdot\left(
{\textstyle\sum\limits_{j=1}^{n}}
\left\vert
{\textstyle\sum\limits_{i=1}^{n}}
h_{ij}x_{i}^{(1)}\right\vert ^{2}\right)  ^{1/2}\\
&  \leq\left(
{\textstyle\sum\limits_{j=1}^{n}}
\,%
{\textstyle\sum\limits_{i,k=1}^{n}}
h_{ij}h_{kj}x_{i}^{(1)}\overline{x_{k}^{(1)}}\right)  ^{1/2}\\
&  =\left(
{\textstyle\sum\limits_{i,k=1}^{n}}
x_{i}^{(1)}\overline{x_{k}^{(1)}}n\delta_{ik}\right)  ^{1/2}\\
&  \leq n^{1/2}\text{.}%
\end{align*}
We shall show that for other values of $n$, we have the same inequality, just
with an extra multiplicative \textquotedblleft asymptotic
factor\textquotedblright\ $\left(  1+\epsilon\right)  $.

As in the first step of the proof of Theorem \ref{090}, given $\delta>0$,
there exists a positive integer $N$ such that, whenever $n>N$, there is a
Hadamard matrix of order $t$ satisfying
\begin{equation}
n\leq t\leq n\left(  1+\delta\right)  \text{.} \label{B4}%
\end{equation}
Now, let us consider a Hadamard matrix $\left[  h_{ij}\right]  _{t\times t}$
of order $t$. Let $A_{0}\colon\ell_{2}^{t}\times\ell_{2}^{t}\longrightarrow
\mathbb{C}$ be as in (\ref{B3}). Thus,%
\[
\left\Vert A_{0}\right\Vert \leq t^{1/2}\text{.}%
\]

If $n=t$, we just consider $A=A_{0}$. If $n<t$, we define%
\[
A\colon\ell_{2}^{n}\times\ell_{2}^{n}\longrightarrow\mathbb{C}\text{,}%
\]
by
\[
A\left(  \left(  z_{i}^{\left(  1\right)  }\right)  _{i=1}^{n},\left(
z_{j}^{\left(  2\right)  }\right)  _{j=1}^{n}\right)  =A_{0}\left(  \left(
z_{1}^{\left(  1\right)  },\ldots,z_{n}^{\left(  1\right)  },0,\ldots
,0\right)  ,\left(  z_{1}^{\left(  2\right)  },\ldots,z_{n}^{\left(  2\right)
},0,\ldots,0\right)  \right)  \text{.}%
\]
Then, given $x^{\left(  1\right)  },x^{\left(  2\right)  }\in B_{\ell_{2}^{n}%
}$, by (\ref{B4}),%
\begin{align*}
\left\vert A\left(  \left(  x_{i}^{\left(  1\right)  }\right)  _{i=1}%
^{n},\left(  x_{j}^{\left(  2\right)  }\right)  _{j=1}^{n}\right)
\right\vert  &  =\left\vert A_{0}\left(  \left(  x_{1}^{\left(  1\right)
},\ldots,x_{n}^{\left(  1\right)  },0,\ldots,0\right)  ,\left(  x_{1}^{\left(
2\right)  },\ldots,x_{n}^{\left(  2\right)  },0,\ldots,0\right)  \right)
\right\vert \\
&  \leq\left\Vert A_{0}\right\Vert \leq t^{1/2}\leq\left(  1+\delta\right)
^{1/2}n^{1/2}%
\end{align*}
and the proof is complete.
\end{proof}

Combining the above result with Lemma \ref{LemaB1} we obtain the main result
of this section:

\begin{theorem}
\label{TeoB3}Let $\epsilon>0$ and $p_{1},p_{2}\in\lbrack2,\infty]$. There
exists a positive integer $N$ $\mathrm{(}$depending just on $\epsilon
\mathrm{)}$ such that, whenever $\max\left\{  n_{1},n_{2}\right\}  >N$, there
is a bilinear form $A\colon\ell_{p_{1}}^{n_{1}}\times\ell_{p_{2}}^{n_{2}%
}\longrightarrow\mathbb{K}$ of the type
\[
A(z^{(1)},z^{(2)})=%
{\textstyle\sum\limits_{i=1}^{n_{1}}}
{\textstyle\sum\limits_{j=1}^{n_{2}}}
\pm z_{i}^{(1)}z_{j}^{(2)}\text{,}%
\]
such that%
\begin{equation}
\Vert A\Vert\leq\left(  1+\epsilon\right)  \max\left\{  n_{2}^{1/p_{2}^{\ast}%
}n_{1}^{\max\left\{  \frac{1}{2}-\frac{1}{p_{1}},0\right\}  },n_{1}%
^{1/p_{1}^{\ast}}n_{2}^{\max\left\{  \frac{1}{2}-\frac{1}{p_{2}},0\right\}
}\right\}  \text{.} \label{B5}%
\end{equation}

\end{theorem}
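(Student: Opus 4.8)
The strategy is to reduce everything to the ``square'' case $\ell_2^{n}\times\ell_2^{n}$ handled by the preceding lemma, to pass to general $p_1,p_2\in[2,\infty]$ by the H\"older rescaling that already appears in the proof of Lemma \ref{LemaB1}, and then to rewrite the resulting maximum. It suffices to take $\mathbb K=\mathbb C$. Given $\epsilon>0$, I would first invoke the preceding lemma to obtain a positive integer $N$, depending only on $\epsilon$, such that for each integer $n>N$ there is a $\pm1$ bilinear form on $\ell_2^{n}\times\ell_2^{n}$ of norm at most $(1+\epsilon)n^{1/2}$. Inspecting that proof, the form in question is a Hadamard bilinear form of some order $t\ge n$ with $t^{1/2}\le(1+\epsilon)n^{1/2}$, padded with zeros; consequently its restriction to \emph{any} $\ell_2^{n_1}\times\ell_2^{n_2}$ with $n_1,n_2\le n$ still has norm at most $(1+\epsilon)n^{1/2}$.

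Next I would fix $p_1,p_2\in[2,\infty]$ and positive integers $n_1,n_2$ with $n:=\max\{n_1,n_2\}>N$. Let $A_1\colon\ell_2^{n_1}\times\ell_2^{n_2}\to\mathbb C$ be the restriction just described, so that $\|A_1\|\le(1+\epsilon)n^{1/2}$, and let $A$ denote the same array of signs viewed as a map on $\ell_{p_1}^{n_1}\times\ell_{p_2}^{n_2}$. Exactly as in the proof of Lemma \ref{LemaB1}, for unit vectors $x^{(k)}\in\ell_{p_k}^{n_k}$ the vectors $z^{(k)}:=n_k^{1/p_k-1/2}x^{(k)}$ satisfy $\|z^{(k)}\|_2\le1$ by H\"older (here $p_k\ge2$ is used, which also makes $\tfrac12-\tfrac1{p_k}\ge0$), whence
\[
\|A\|\le n_1^{\frac12-\frac1{p_1}}\,n_2^{\frac12-\frac1{p_2}}\,\|A_1\|\le(1+\epsilon)\,n^{1/2}\,n_1^{\frac12-\frac1{p_1}}\,n_2^{\frac12-\frac1{p_2}}.
\]

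It then remains to rewrite this bound. Using $n^{1/2}=\max\{n_1^{1/2},n_2^{1/2}\}$ together with $\max\{\tfrac12-\tfrac1{p_k},0\}=\tfrac12-\tfrac1{p_k}$ and $\tfrac1{p_k^{\ast}}=1-\tfrac1{p_k}=\tfrac12+(\tfrac12-\tfrac1{p_k})$ (all valid since $p_k\ge2$), a short case check according to whether $n_1\ge n_2$ or $n_1\le n_2$ yields
\[
\max\{n_1^{1/2},n_2^{1/2}\}\,n_1^{\frac12-\frac1{p_1}}\,n_2^{\frac12-\frac1{p_2}}=\max\Bigl\{\,n_1^{1/p_1^{\ast}}\,n_2^{\max\{\frac12-\frac1{p_2},0\}},\ n_2^{1/p_2^{\ast}}\,n_1^{\max\{\frac12-\frac1{p_1},0\}}\,\Bigr\},
\]
which is precisely the right-hand side of (\ref{B5}); since $N$ came only from the Hadamard-gap estimate, it depends only on $\epsilon$ and not on $p_1,p_2$, as required.

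I do not expect a genuine obstacle, only a point of care: since the preceding lemma produces forms on a \emph{square} space, one must take the square dimension to be $\max\{n_1,n_2\}$ (the coordinate known to exceed $N$) and then restrict downward, which is why the conclusion holds under the hypothesis $\max\{n_1,n_2\}>N$ rather than the stronger $\min\{n_1,n_2\}>N$ one would get by quoting Lemma \ref{LemaB1} verbatim with $f(n)=(1+\epsilon)n^{1/2}$ and $k=N+1$. For this reason I would reproduce the short H\"older computation of Lemma \ref{LemaB1} in place rather than cite its statement directly.
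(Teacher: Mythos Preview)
Your proposal is correct and follows essentially the same route as the paper, which simply states that Theorem \ref{TeoB3} follows by combining the preceding lemma with Lemma \ref{LemaB1}. Your final algebraic identity is exactly right (the ratio of the two quantities inside the max is $n_1^{1/2}/n_2^{1/2}$), and your care in observing that the proof of Lemma \ref{LemaB1} only uses the hypothesis at $n=\max\{n_1,\ldots,n_m\}$---so that the conclusion holds under $\max\{n_1,n_2\}>N$ rather than $\min\{n_1,n_2\}>N$---is a point the paper leaves implicit.
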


Now we prove three corollaries that essentially cover the remaining cases (see
Figure \ref{Fig2}):

\begin{corollary}
\label{CorB4}Let $\epsilon>0$ and $1\leq p_{1}<2\leq p_{2}$ $\mathrm{(}1\leq
p_{2}<2\leq p_{1}\mathrm{)}$. There exists a positive integer $N$ $\mathrm{(}%
$depending just on $\epsilon\mathrm{)}$ such that, whenever $n_{2}>N$ and
$n_{1}\leq n_{2}$ $\mathrm{(}n_{1}\geq N$ and $n_{2}\leq n_{1}\mathrm{)}$,
there is a bilinear form $A\colon\ell_{p_{1}}^{n_{1}}\times\ell_{p_{2}}%
^{n_{2}}\longrightarrow\mathbb{K}$ of the type
\[
A(x,y)=%
{\textstyle\sum\limits_{i=1}^{n_{1}}}
{\textstyle\sum\limits_{j=1}^{n_{2}}}
\pm x_{i}y_{j}\text{,}%
\]
such that%
\[
\Vert A\Vert\leq\left(  1+\epsilon\right)  \max\left\{  n_{2}^{1/p_{2}^{\ast}%
}n_{1}^{\max\left\{  \frac{1}{2}-\frac{1}{p_{1}},0\right\}  },n_{1}%
^{1/p_{1}^{\ast}}n_{2}^{\max\left\{  \frac{1}{2}-\frac{1}{p_{2}},0\right\}
}\right\}  \text{.}%
\]

\end{corollary}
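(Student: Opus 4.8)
The plan is to deduce the corollary from Theorem \ref{TeoB3} — with the first exponent specialised to $2$ — together with the elementary fact that $B_{\ell_{p_1}^{n_1}} \subseteq B_{\ell_2^{n_1}}$ whenever $p_1 \le 2$, i.e.\ that the formal identity $\ell_{p_1}^{n_1} \hookrightarrow \ell_2^{n_1}$ has norm $1$. I shall carry out the case $1 \le p_1 < 2 \le p_2$ with $n_1 \le n_2$; the parenthetical case $1 \le p_2 < 2 \le p_1$ with $n_2 \le n_1$ is entirely symmetric, obtained by interchanging the two variables and applying Theorem \ref{TeoB3} with first exponent $p_1$ and second exponent $2$.

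First I would simplify the right-hand side. Since $1/p_1 > 1/2$ and $p_2 \ge 2$ we have $\max\{\tfrac12 - \tfrac1{p_1},0\} = 0$, $\max\{\tfrac12 - \tfrac1{p_2},0\} = \tfrac12 - \tfrac1{p_2}$, and $1/p_1^{\ast} = 1 - 1/p_1 \le \tfrac12$. Combining these with $n_1 \le n_2$ one gets $n_1^{1/p_1^{\ast}} n_2^{1/2 - 1/p_2} \le n_2^{1/p_1^{\ast} + 1/2 - 1/p_2} \le n_2^{1 - 1/p_2} = n_2^{1/p_2^{\ast}}$, so that
\[
\max\left\{ n_2^{1/p_2^{\ast}} n_1^{\max\{\frac12-\frac1{p_1},0\}},\ n_1^{1/p_1^{\ast}} n_2^{\max\{\frac12-\frac1{p_2},0\}} \right\} = n_2^{1/p_2^{\ast}}.
\]
Hence it suffices to produce, for $n_2 > N$, a bilinear form of the prescribed type with $\Vert A\Vert \le (1+\epsilon)\, n_2^{1/p_2^{\ast}}$.

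To this end I would invoke Theorem \ref{TeoB3} with first exponent $2$ and second exponent $p_2$ (both in $[2,\infty]$): it provides a positive integer $N$, depending only on $\epsilon$, such that whenever $\max\{n_1,n_2\} > N$ — which holds here since $n_2 > N$ — there is a bilinear form $A \colon \ell_2^{n_1} \times \ell_{p_2}^{n_2} \longrightarrow \mathbb{K}$ of the type $\sum_{i=1}^{n_1}\sum_{j=1}^{n_2} \pm z_i^{(1)} z_j^{(2)}$ with
\[
\Vert A\Vert \le (1+\epsilon)\max\left\{ n_2^{1/p_2^{\ast}},\ n_1^{1/2} n_2^{1/2 - 1/p_2} \right\} = (1+\epsilon)\, n_2^{1/p_2^{\ast}},
\]
the last equality again because $n_1 \le n_2$. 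The same array of signs now defines a bilinear form on $\ell_{p_1}^{n_1}\times\ell_{p_2}^{n_2}$, still of the required type; since $p_1 \le 2$ we have $B_{\ell_{p_1}^{n_1}} \subseteq B_{\ell_2^{n_1}}$, so its norm as a form on $\ell_{p_1}^{n_1}\times\ell_{p_2}^{n_2}$ does not exceed its norm as a form on $\ell_2^{n_1}\times\ell_{p_2}^{n_2}$, i.e.\ is $\le (1+\epsilon)\, n_2^{1/p_2^{\ast}}$. By the first step this is exactly the asserted bound.

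There is no genuine obstacle here: the only points requiring attention are the bookkeeping of the exponents — checking that under $n_1 \le n_2$ and $p_1 \le 2 \le p_2$ the maximum on the right-hand side collapses to the single power $n_2^{1/p_2^{\ast}}$ — and the observation that the norm-one inclusion $\ell_{p_1}^{n_1}\hookrightarrow\ell_2^{n_1}$ causes the operator norm to decrease rather than increase.
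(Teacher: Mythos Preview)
Your proof is correct and follows essentially the same approach as the paper: simplify the maximum on the right-hand side to $n_2^{1/p_2^{\ast}}$, apply Theorem~\ref{TeoB3} with first exponent $2$ and second exponent $p_2$, and then use the norm-one inclusion $\ell_{p_1}^{n_1}\hookrightarrow\ell_2^{n_1}$ to transfer the bound. Your write-up is in fact slightly more explicit than the paper's in verifying the collapse of the maximum and in computing what Theorem~\ref{TeoB3} gives for the pair $(2,p_2)$.
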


\begin{proof}
Let us assume $1\leq p_{1}<2\leq p_{2}$ and $n_{1}\leq n_{2}$. Notice that in
this case%
\[
\max\left\{  n_{2}^{1/p_{2}^{\ast}}n_{1}^{\max\left\{  \frac{1}{2}-\frac
{1}{p_{1}},0\right\}  },n_{1}^{1/p_{1}^{\ast}}n_{2}^{\max\left\{  \frac{1}%
{2}-\frac{1}{p_{2}},0\right\}  }\right\}  =n_{2}^{1/p_{2}^{\ast}}\text{.}%
\]
By Theorem \ref{TeoB3}, given $\epsilon>0$, there exists $N$ (depending just
on $\epsilon$) such that, if $n_{2}>N$, there is a bilinear form $A_{1}%
\colon\ell_{2}^{n_{1}}\times\ell_{p_{2}}^{n_{2}}\longrightarrow\mathbb{K}$ of
the type%
\[
A_{1}(z^{\left(  1\right)  },z^{\left(  2\right)  })=%
{\textstyle\sum\limits_{i=1}^{n_{1}}}
{\textstyle\sum\limits_{j=1}^{n_{2}}}
\pm z_{i}^{\left(  1\right)  }z_{j}^{\left(  2\right)  }\text{,}%
\]
satisfying%
\[
\Vert A_{1}\Vert\leq\left(  1+\epsilon\right)  n_{2}^{1/p_{2}^{\ast}}\text{.}%
\]
Now, define $A\colon\ell_{p_{1}}^{n_{1}}\times\ell_{p_{2}}^{n_{2}%
}\longrightarrow\mathbb{K}$ by $A\left(  x,y\right)  =A_{1}\left(  x,y\right)
$ and note that%
\[
\left\Vert A\right\Vert \leq\Vert A_{1}\Vert\leq\left(  1+\epsilon\right)
n_{2}^{1/p_{2}^{\ast}}\text{.}%
\]
The case $1\leq p_{2}<2\leq p_{1}$ and $n_{2}\leq n_{1}$ is analogous.
\end{proof}

\begin{corollary}
\label{CorB5}Let $\epsilon>0$ and $p\in\lbrack1,\infty]$. There exists a
positive integer $N$ $\mathrm{(}$depending just on $\epsilon\mathrm{)}$ such
that, whenever $\max\left\{  n_{1},n_{2}\right\}  >N$, there is a bilinear
form $A\colon\ell_{p}^{n_{1}}\times\ell_{p}^{n_{2}}\longrightarrow\mathbb{K}$
of the type
\[
A(z^{(1)},z^{(2)})=%
{\textstyle\sum\limits_{i=1}^{n_{1}}}
{\textstyle\sum\limits_{j=1}^{n_{2}}}
\pm z_{i}^{(1)}z_{j}^{(2)}\text{,}%
\]
such that%
\[
\Vert A\Vert\leq\left(  1+\epsilon\right)  \max\left\{  n_{2}^{1/p^{\ast}%
}n_{1}^{\max\left\{  \frac{1}{2}-\frac{1}{p},0\right\}  },n_{1}^{1/p^{\ast}%
}n_{2}^{\max\left\{  \frac{1}{2}-\frac{1}{p},0\right\}  }\right\}  \text{.}%
\]

\end{corollary}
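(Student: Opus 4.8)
The plan is to split the parameter range into $p\in[2,\infty]$ and $p\in[1,2)$. As usual it suffices to treat $\mathbb{K}=\mathbb{C}$, the real case following because the real unit balls are contained in the complex ones. If $p\in[2,\infty]$ then $\max\{\tfrac12-\tfrac1p,0\}=\tfrac12-\tfrac1p$ and the assertion is exactly Theorem~\ref{TeoB3} with $p_1=p_2=p$; so from now on assume $p\in[1,2)$. In this range $\max\{\tfrac12-\tfrac1p,0\}=0$ and $0\le\tfrac1{p^{\ast}}\le\tfrac12$, so the bound to be reached is simply $(1+\epsilon)\max\{n_1,n_2\}^{1/p^{\ast}}$; write $n=\max\{n_1,n_2\}$. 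Note that the naive estimate coming from $\ell_{p}^{n}\hookrightarrow\ell_{2}^{n}$ gives only $n^{1/2}$, which exceeds the target $n^{1/p^{\ast}}$, so something beyond monotonicity is needed.

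The construction is the near-Hadamard form already used in the paper. Given $\delta>0$, the First Step of the proof of Theorem~\ref{090} furnishes a positive integer $N=N(\delta)$ such that, whenever $n>N$, there is a Hadamard order $t$ with $n\le t\le(1+\delta)n$; let $\mathbf{H}=[h_{ij}]_{t\times t}$ be a Hadamard matrix and set
\[
A\colon\ell_{p}^{n_{1}}\times\ell_{p}^{n_{2}}\longrightarrow\mathbb{C},\qquad A(x,y)=\sum_{i=1}^{n_{1}}\sum_{j=1}^{n_{2}}h_{ij}\,x_{i}y_{j},
\]
a bilinear form of the stated type (the zero-padded restriction of the full $t\times t$ Hadamard form). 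For this one fixed form I record two endpoint estimates, obtained by changing only the domain norm: since $|h_{ij}|=1$ we have $|A(x,y)|\le\sum_{i,j}|x_i|\,|y_j|=\Vert x\Vert_1\Vert y\Vert_1$, hence $\Vert A\Vert_{\ell_{1}^{n_{1}}\times\ell_{1}^{n_{2}}}\le1$; and, exactly as in the Cauchy--Schwarz computation in the proof of the lemma preceding Theorem~\ref{TeoB3} (zero-padding being an isometry into $\ell_{2}^{t}$), $\Vert A\Vert_{\ell_{2}^{n_{1}}\times\ell_{2}^{n_{2}}}\le t^{1/2}$.

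The decisive step is the bilinear complex interpolation theorem. Since $[\ell_{1}^{n_{k}},\ell_{2}^{n_{k}}]_{\theta}=\ell_{p}^{n_{k}}$ isometrically for $\theta=2(1-\tfrac1p)=2/p^{\ast}\in[0,1)$ (because $\tfrac1p=(1-\theta)+\tfrac{\theta}{2}$), applying it to the single bilinear form $A$ yields
\[
\Vert A\Vert_{\ell_{p}^{n_{1}}\times\ell_{p}^{n_{2}}}\le\Vert A\Vert_{\ell_{1}^{n_{1}}\times\ell_{1}^{n_{2}}}^{\,1-\theta}\,\Vert A\Vert_{\ell_{2}^{n_{1}}\times\ell_{2}^{n_{2}}}^{\,\theta}\le 1^{1-\theta}\bigl(t^{1/2}\bigr)^{\theta}=t^{\theta/2}=t^{1/p^{\ast}}.
\]
Since $t\le(1+\delta)n$ and $\tfrac1{p^{\ast}}\le1$, the right-hand side is at most $(1+\delta)^{1/p^{\ast}}n^{1/p^{\ast}}\le(1+\delta)\,n^{1/p^{\ast}}$, and this equals $(1+\delta)\max\{n_2^{1/p^{\ast}}n_1^{\max\{1/2-1/p,0\}},n_1^{1/p^{\ast}}n_2^{\max\{1/2-1/p,0\}}\}$ because $\max\{1/2-1/p,0\}=0$ and $1/p^{\ast}\ge0$. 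Taking $\delta=\epsilon$ completes the proof, with $N$ depending only on $\epsilon$.

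The only ingredient that goes beyond bookkeeping is having available the bilinear (Calder\'on) complex interpolation theorem together with the identification $[\ell_1^{n},\ell_2^{n}]_{\theta}=\ell_p^{n}$ for the correct $\theta$; with those in hand, the case $p\ge2$, the reduction to complex scalars, and the two endpoint bounds are all immediate. It is worth noting that the degenerate endpoint $p=1$ causes no trouble: there $\theta=0$ and the statement collapses to $\Vert A\Vert_{\ell_1^{n_1}\times\ell_1^{n_2}}\le1\le1+\epsilon$, which holds for every $\pm1$ bilinear form and needs no Hadamard matrix.
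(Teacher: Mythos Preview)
Your proof is correct and follows essentially the same approach as the paper: reduce $p\ge 2$ to Theorem~\ref{TeoB3}, and for $p\in[1,2)$ interpolate the same near-Hadamard bilinear form between the trivial $\ell_1\times\ell_1$ bound $1$ and the $\ell_2\times\ell_2$ Hadamard bound. The only cosmetic difference is the order of operations: the paper first invokes Theorem~\ref{TeoB3} at $p_1=p_2=2$ to get $\Vert A\Vert_{\ell_2\times\ell_2}\le(1+\epsilon)\max\{n_1,n_2\}^{1/2}$ and then interpolates (picking up the exponent $(1+\epsilon)^{2(1-1/p)}\le 1+\epsilon$), whereas you interpolate with the exact bound $t^{1/2}$ and only afterwards use $t\le(1+\delta)n$.
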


\begin{proof}
It is suffices to consider $p\in\left[  1,2\right)  $, because the case
$p\in\left[  2,\infty\right]  $ is solved in Theorem \ref{TeoB3}. If
$n_{1},n_{2}\in\mathbb{N}$ and $\varepsilon_{ij}\in\left\{  -1,1\right\}  $
for each $\left(  i,j\right)  \in\left\{  1,\ldots,n_{1}\right\}
\times\left\{  1,\ldots,n_{2}\right\}  $, it is obvious that all bilinear
forms $A\colon\ell_{1}^{n_{1}}\times\ell_{1}^{n_{2}}\longrightarrow\mathbb{C}$
defined by
\begin{equation}
A\left(  z^{\left(  1\right)  },z^{\left(  2\right)  }\right)  =%
{\textstyle\sum\limits_{i=1}^{n_{1}}}
{\textstyle\sum\limits_{j=1}^{n_{2}}}
\varepsilon_{ij}z_{i}^{\left(  1\right)  }z_{j}^{\left(  2\right)  }
\label{B6}%
\end{equation}
have norm precisely $1$. If $1\leq p<2$, by the Riesz-Thorin Theorem applied
to (\ref{B6}) and (\ref{B5}) with $p_{1}=p_{2}=2$, we can find a bilinear form
$A\colon\ell_{p}^{n_{1}}\times\ell_{p}^{n_{2}}\longrightarrow\mathbb{C}$
satisfying
\begin{align*}
\Vert A\Vert &  \leq\left(  1+\epsilon\right)  ^{2\left(  1-\frac{1}%
{p}\right)  }\max\left\{  n_{1}^{1-\frac{1}{p}},n_{2}^{1-\frac{1}{p}}\right\}
\\
&  \leq\left(  1+\epsilon\right)  \max\left\{  n_{1}^{1-\frac{1}{p}}%
,n_{2}^{1-\frac{1}{p}}\right\} \\
&  =\left(  1+\epsilon\right)  \max\left\{  n_{2}^{1/p^{\ast}}n_{1}%
^{\max\left\{  \frac{1}{2}-\frac{1}{p},0\right\}  },n_{1}^{1/p^{\ast}}%
n_{2}^{\max\left\{  \frac{1}{2}-\frac{1}{p},0\right\}  }\right\}  \text{.}%
\end{align*}

\end{proof}

\begin{corollary}
\label{CorB6}Let $\epsilon>0$ and $1\leq p_{1}\leq p_{2}\leq2$ $\mathrm{(}%
1\leq p_{2}\leq p_{1}\leq2\mathrm{)}$. There exists a positive integer
$N$,$\mathrm{\ }$depending just on $\epsilon,$ such that, whenever $n_{2}>N$
$\mathrm{(}n_{1}>N\mathrm{)}$ and $n_{1}\leq n_{2}$ $\mathrm{(}n_{2}\leq
n_{1}\mathrm{)}$ there is a bilinear form $A\colon\ell_{p_{1}}^{n_{1}}%
\times\ell_{p_{2}}^{n_{2}}\longrightarrow\mathbb{K}$ of the type%
\[
A(z^{(1)},z^{(2)})=%
{\textstyle\sum\limits_{i=1}^{n_{1}}}
{\textstyle\sum\limits_{j=1}^{n_{2}}}
\pm z_{i}^{(1)}z_{j}^{(2)}\text{,}%
\]
such that%
\[
\Vert A\Vert\leq\left(  1+\epsilon\right)  \max\left\{  n_{2}^{1/p_{2}^{\ast}%
}n_{1}^{\max\left\{  \frac{1}{2}-\frac{1}{p_{1}},0\right\}  },n_{1}%
^{1/p_{1}^{\ast}}n_{2}^{\max\left\{  \frac{1}{2}-\frac{1}{p_{2}},0\right\}
}\right\}  \text{.}%
\]

\end{corollary}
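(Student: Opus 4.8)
The plan is to reduce the statement to Corollary \ref{CorB5} together with the elementary monotonicity of $\ell_p$-balls, so that essentially no new work is needed. By interchanging the roles of the two variables it suffices to treat the case $1\le p_1\le p_2\le 2$ with $n_1\le n_2$ and $n_2>N$; the case $1\le p_2\le p_1\le 2$ with $n_2\le n_1$ is entirely symmetric.

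First I would simplify the target. Since $p_1,p_2\le 2$ we have $\max\{\tfrac12-\tfrac1{p_1},0\}=\max\{\tfrac12-\tfrac1{p_2},0\}=0$, so the asserted bound is $(1+\epsilon)\max\{n_2^{1/p_2^{\ast}},n_1^{1/p_1^{\ast}}\}$; moreover $p_1\le p_2$ gives $1/p_1^{\ast}\le 1/p_2^{\ast}$ (recall $p^{\ast}$ is decreasing in $p$), and combined with $n_1\le n_2$ this shows the bound is just $(1+\epsilon)\,n_2^{1/p_2^{\ast}}$. Next I would apply Corollary \ref{CorB5} with $p=p_2\in[1,2]$: letting $N$ be the integer it furnishes (depending only on $\epsilon$) and using $\max\{n_1,n_2\}=n_2>N$, one obtains a bilinear form $A(z^{(1)},z^{(2)})=\sum_{i=1}^{n_1}\sum_{j=1}^{n_2}\pm z_i^{(1)}z_j^{(2)}$ with
\[
\bigl\|A\colon\ell_{p_2}^{n_1}\times\ell_{p_2}^{n_2}\to\mathbb{K}\bigr\|\le(1+\epsilon)\max\bigl\{n_2^{1/p_2^{\ast}},n_1^{1/p_2^{\ast}}\bigr\}=(1+\epsilon)\,n_2^{1/p_2^{\ast}},
\]
where I used again $\max\{\tfrac12-\tfrac1{p_2},0\}=0$ and $n_1\le n_2$.

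Finally I would keep the very same coefficients and regard $A$ as a form on $\ell_{p_1}^{n_1}\times\ell_{p_2}^{n_2}$. Because $p_1\le p_2$, the closed unit ball of $\ell_{p_1}^{n_1}$ is contained in that of $\ell_{p_2}^{n_1}$, so the supremum computing $\|A\colon\ell_{p_1}^{n_1}\times\ell_{p_2}^{n_2}\to\mathbb{K}\|$ is taken over a smaller set; hence it is at most $\|A\colon\ell_{p_2}^{n_1}\times\ell_{p_2}^{n_2}\to\mathbb{K}\|\le(1+\epsilon)\,n_2^{1/p_2^{\ast}}$, which by the first step is precisely the claimed estimate. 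I do not expect a genuine obstacle here; the only point requiring care is to notice that interpolation is unnecessary — the inclusion of $\ell_p$-unit balls for $p_1\le p_2$ does the job once the exponents $\max\{\tfrac12-\tfrac1{p_i},0\}$ have been seen to vanish, and that one should reuse the form supplied by Corollary \ref{CorB5} rather than produce a new one. (If one insisted on an interpolation argument, one could instead apply the Riesz--Thorin theorem in the first variable only, to the endpoint bounds $\|A\colon\ell_{p_2}^{n_1}\times\ell_{p_2}^{n_2}\|\le(1+\epsilon)n_2^{1/p_2^{\ast}}$ and $\|A\colon\ell_1^{n_1}\times\ell_{p_2}^{n_2}\|\le n_2^{1/p_2^{\ast}}$, the latter being a one-line Cauchy--Schwarz estimate for the Hadamard-type form underlying $A$; but this is strictly longer.)
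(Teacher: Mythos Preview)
Your proof is correct and follows essentially the same route as the paper: simplify the target to $(1+\epsilon)n_{2}^{1/p_{2}^{\ast}}$, invoke Corollary~\ref{CorB5} with $p=p_{2}$ to obtain a form on $\ell_{p_{2}}^{n_{1}}\times\ell_{p_{2}}^{n_{2}}$ with the desired bound, and then view the same form on $\ell_{p_{1}}^{n_{1}}\times\ell_{p_{2}}^{n_{2}}$ using the inclusion of unit balls for $p_{1}\le p_{2}$. You even spell out the ball-inclusion justification that the paper leaves implicit.
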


\begin{proof}
If $1\leq p_{1}\leq p_{2}\leq2$ and $n_{1}\leq n_{2}$, then%
\[
\max\left\{  n_{2}^{1/p_{2}^{\ast}}n_{1}^{\max\left\{  \frac{1}{2}-\frac
{1}{p_{1}},0\right\}  },n_{1}^{1/p_{1}^{\ast}}n_{2}^{\max\left\{  \frac{1}%
{2}-\frac{1}{p_{2}},0\right\}  }\right\}  =n_{2}^{1/p_{2}^{\ast}}\text{.}%
\]
By Corollary \ref{CorB5}, given $\epsilon>0$, there exists $N$ (depending just
on $\epsilon$) such that, if $n_{2}>N$, there is a bilinear form $A_{1}%
\colon\ell_{p_{2}}^{n_{1}}\times\ell_{p_{2}}^{n_{2}}\longrightarrow\mathbb{K}$
of the type%
\[
A_{1}(z^{\left(  1\right)  },z^{\left(  2\right)  })=%
{\textstyle\sum\limits_{i=1}^{n_{1}}}
{\textstyle\sum\limits_{j=1}^{n_{2}}}
\pm z_{i}^{\left(  1\right)  }z_{j}^{\left(  2\right)  }\text{,}%
\]
satisfying%
\[
\Vert A_{1}\Vert\leq\left(  1+\epsilon\right)  n_{2}^{1/p_{2}^{\ast}}\text{.}%
\]
Now, define $A\colon\ell_{p_{1}}^{n_{1}}\times\ell_{p_{2}}^{n_{2}%
}\longrightarrow\mathbb{K}$ by $A\left(  x,y\right)  =A_{1}\left(  x,y\right)
$ and note that%
\[
\left\Vert A\right\Vert \leq\Vert A_{1}\Vert\leq\left(  1+\epsilon\right)
n_{2}^{1/p_{2}^{\ast}}\text{.}%
\]

\end{proof}

\begin{remark}
It is worth mentioning that there are still a few open cases concerning the
asymptotic behavior of the constants of Bennett's inequality. Our approach
does not cover bilinear forms $A\colon\ell_{p_{1}}^{n_{1}}\times\ell_{p_{2}%
}^{n_{2}}\longrightarrow\mathbb{K}$ with $1\leq p_{1}\leq p_{2}\leq2$
$\mathrm{(}1\leq p_{2}\leq p_{1}\leq2\mathrm{)}$ and $n_{1}>n_{2}$
$\mathrm{(}n_{2}>n_{1}\mathrm{)}$.
\end{remark}

%

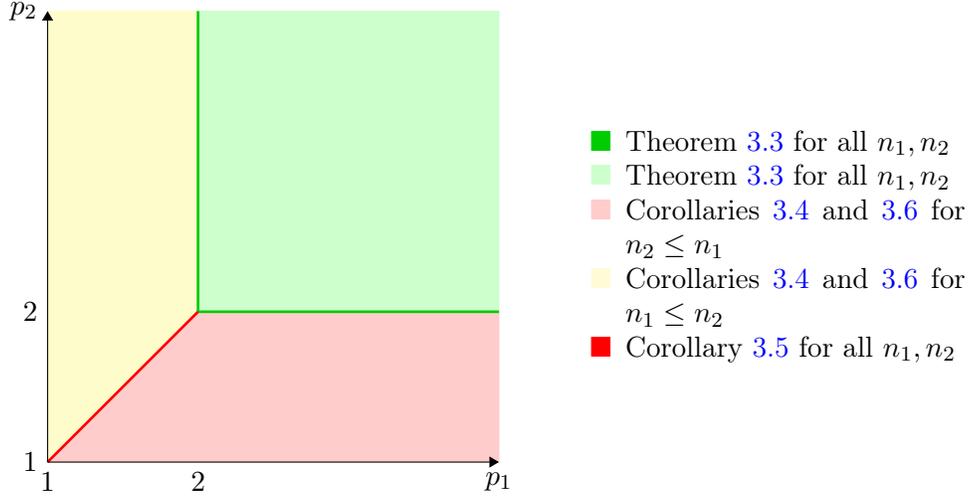
\begin{figure}[H]
\centering\begin{minipage}{6,9cm}
\begin{tikzpicture}[x=2cm,y=2cm]
\draw[yellow, fill=yellow,opacity=.2] (1,1)--(1,4)--(2,4)--(2,2)--(1,1);
\draw[red, fill=red,opacity=.2] (1,1)--(4,1)--(4,2)--(2,2)--(1,1);
\draw[green, fill=green,opacity=.2] (2,2)--(4,2)--(4,4)--(2,4)--(2,2);
\draw[line width=1pt,color=green!80!black] (2,4)--(2,2)--(4,2);
\draw[line width=1pt,color=red] (1,1)-- (2,2);
\draw[->] (1,1)-- (1,4);
\draw[->] (1,1)-- (4,1);
\draw(1,1) node[below] {$1$};
\draw(2,1) node[below] {$2$};
\draw(4,1) node[below] {$p_{1}$};
\draw(1,1) node[left] {$1$};
\draw(1,2) node[left] {$2$};
\draw(1,4) node[left] {$p_{2}$};
\end{tikzpicture}
\end{minipage}
\begin{minipage}[bl]{5,8cm}
\begin{enumerate}
\item[{\color{green!80!black}$\blacksquare$}] Theorem \ref{TeoB3}
for all $n_{1}, n_{2}$
\item[{\color{green!20}$\blacksquare$}] Theorem \ref{TeoB3} for all $n_{1}%
, n_{2}$
\item[{\color{red!20}$\blacksquare$}] Corollaries \ref{CorB4} and \ref{CorB6}
for $n_{2} \leq n_{1}$
\item[{\color{yellow!20}$\blacksquare$}] Corollaries \ref{CorB4}
and \ref{CorB6}
for $n_{1} \leq n_{2}$
\item[{\color{red}$\blacksquare$}] Corollary \ref{CorB5} for all $n_{1}
, n_{2}$
\end{enumerate}
\end{minipage}
\caption
{The uniform asymptotic behavior of the constants of Bennett's inequality.}
\label{Fig2}
\end{figure}%

As a final corollary, we show that the asymptotic constant $1$ is optimal when
$n_{1}=n_{2}$ and $p_{1},p_{2}\in\lbrack1,2]$:

\begin{corollary}
\label{op71}Let $p_{1},p_{2}\in\lbrack1,\infty]$ and $\epsilon>0.$ There is a
positive integer $N$ $\mathrm{(}$depending just on $\epsilon\mathrm{)}$ such
that, whenever $n>N$, there is a bilinear form $A\colon\ell_{p_{1}}^{n}%
\times\ell_{p_{2}}^{n}\longrightarrow\mathbb{K}$ of the type%
\begin{equation}
A(z^{(1)},z^{(2)})=%
{\textstyle\sum\limits_{i=1}^{n}}
{\textstyle\sum\limits_{j=1}^{n}}
\pm z_{i}^{(1)}z_{j}^{(2)}\text{,} \label{mengao}%
\end{equation}
such that%
\[
\Vert A\Vert\leq\left(  1+\epsilon\right)  \max\left\{  n^{1/p_{2}^{\ast}%
}n^{\max\left\{  \frac{1}{2}-\frac{1}{p_{1}},0\right\}  },n^{1/p_{1}^{\ast}%
}n^{\max\left\{  \frac{1}{2}-\frac{1}{p_{2}},0\right\}  }\right\}
\]
and the constant $1$ is optimal whenever $p_{1},p_{2}\in\lbrack1,2]$.
\end{corollary}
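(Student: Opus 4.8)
I would split the statement into its two assertions. The existence of $A$ with the stated bound follows at once from the results already established in this section once one notices that here $n_{1}=n_{2}=n$, so every ordering hypothesis ($n_{1}\le n_{2}$ or $n_{2}\le n_{1}$) appearing in Corollaries \ref{CorB4} and \ref{CorB6} is automatically fulfilled. Precisely: if $p_{1},p_{2}\in[2,\infty]$, apply Theorem \ref{TeoB3}; if exactly one of $p_{1},p_{2}$ belongs to $[1,2)$ and the other to $[2,\infty]$, apply Corollary \ref{CorB4} (in the orientation matching which of the two is smaller); and if $p_{1},p_{2}\in[1,2]$, apply Corollary \ref{CorB6} (using its first form when $p_{1}\le p_{2}$ and the parenthetical one when $p_{2}\le p_{1}$; the sub-case $p_{1}=p_{2}$ is also covered directly by Corollary \ref{CorB5}). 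In each instance the exponent of $n$ produced is exactly the one in the statement and $N$ depends only on $\epsilon$.

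\textbf{Optimality when $p_{1},p_{2}\in[1,2]$.} In this range $1/p_{k}\ge 1/2$, so $\max\{\tfrac12-\tfrac1{p_{k}},0\}=0$ for $k=1,2$ and the bound of the statement reduces to $\Vert A\Vert\le(1+\epsilon)\max\{n^{1/p_{1}^{\ast}},n^{1/p_{2}^{\ast}}\}$. To show the constant cannot be pushed below $1$, I would prove the matching lower bound $\Vert A\Vert\ge\max\{n^{1/p_{1}^{\ast}},n^{1/p_{2}^{\ast}}\}$ for \emph{every} bilinear form $A(z^{(1)},z^{(2)})=\sum_{i,j}\varepsilon_{ij}z_{i}^{(1)}z_{j}^{(2)}$ with $\varepsilon_{ij}\in\{-1,1\}$ as in \eqref{mengao}, by a one-point test. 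Letting $z^{(1)}$ be the first canonical unit vector, which lies in $B_{\ell_{p_{1}}^{n}}$, and invoking the duality between $\ell_{p_{2}}^{n}$ and $\ell_{p_{2}^{\ast}}^{n}$,
\[
\Vert A\Vert\ \ge\ \sup_{\Vert z^{(2)}\Vert_{p_{2}}\le1}\Bigl|\,\sum_{j=1}^{n}\varepsilon_{1j}\,z_{j}^{(2)}\Bigr|\ =\ \bigl\Vert(\varepsilon_{1j})_{j=1}^{n}\bigr\Vert_{p_{2}^{\ast}}\ =\ n^{1/p_{2}^{\ast}},
\]
because $|\varepsilon_{1j}|=1$ for every $j$; symmetrically, fixing $z^{(2)}$ to be the first unit vector and letting $z^{(1)}$ vary, $\Vert A\Vert\ge n^{1/p_{1}^{\ast}}$. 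Hence any constant $C$ making the inequality of the statement hold for all large $n$ must satisfy $C\ge1$, and together with the first part this shows the optimal constant is exactly $1$.

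\textbf{Expected difficulty.} No analytic obstacle is foreseen: the inequality is pure bookkeeping over Theorem \ref{TeoB3} and Corollaries \ref{CorB4}--\ref{CorB6}, and the optimality is the elementary one-point estimate above. The only point demanding a little care is organizational, namely checking that the orientations used in Corollaries \ref{CorB4} and \ref{CorB6} are admissible — which is exactly where the hypothesis $n_{1}=n_{2}$ enters — and handling the endpoint $p_{k}=1$ (where $p_{k}^{\ast}=\infty$ and the $\ell_{p_{k}^{\ast}}$-norm of a $\pm1$ vector equals $1=n^{1/p_{k}^{\ast}}$) consistently with the interior cases.
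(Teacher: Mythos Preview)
Your proposal is correct and follows essentially the same route as the paper. The paper likewise treats the existence assertion as already settled by the preceding results and devotes the proof only to optimality, which it obtains by the same one-point/duality estimate: with $p=\max\{p_{1},p_{2}\}$ it notes $\Vert A\Vert\ge\max_{i}\bigl(\sum_{j}|A(e_{i},e_{j})|^{p^{\ast}}\bigr)^{1/p^{\ast}}=n^{1/p^{\ast}}$, which is exactly your argument with $e_{1}$ in place of the maximising $e_{i}$ (harmless here since every row of a $\pm1$ matrix has the same $\ell_{p^{\ast}}$-norm).
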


\begin{proof}
All that remains to be proved is the optimality of the asymptotic constant
$1$. Let us show that $1$ cannot be replaced by a smaller constant when
$p_{1},p_{2}\in\lbrack1,2]$. Let $p=\max\{p_{1},p_{2}\}$. It is plain that for
all $A\colon\ell_{p_{1}}^{n}\times\ell_{p_{2}}^{n}\longrightarrow\mathbb{K}$
of the type (\ref{mengao}) we have
\[
n^{1/p^{\ast}}=\max_{i=1,...,n}\left(
{\textstyle\sum\limits_{j=1}^{n}}
\left\vert A\left(  e_{i},e_{j}\right)  \right\vert ^{p^{\ast}}\right)
^{\frac{1}{p^{\ast}}}\leq\left\Vert A\right\Vert .
\]
Therefore%
\[
\left\Vert A\right\Vert \geq n^{1/p^{\ast}}=\max\left\{  n^{1/p_{2}^{\ast}%
}n^{\max\left\{  \frac{1}{2}-\frac{1}{p_{1}},0\right\}  },n^{1/p_{1}^{\ast}%
}n^{\max\left\{  \frac{1}{2}-\frac{1}{p_{2}},0\right\}  }\right\}  \text{,}%
\]
and the proof is done.
\end{proof}

We conclude this section by remarking that, following the lines of the
statement of Theorem \ref{TeoB3}, we can re-state Theorem \ref{090} in a
slightly relaxed form as follows:

\begin{theorem}
Let a positive integer $m$ and $\epsilon>0$ be given. There is an $m$-linear
form $A_{n_{1},\ldots,n_{m}}\colon\ell_{\infty}^{n_{1}}\times\cdots\times
\ell_{\infty}^{n_{m}}\longrightarrow\mathbb{K}$ of the type
\[
A_{n_{1},\ldots,n_{m}}(z^{(1)},\ldots,z^{(m)})=%
{\textstyle\sum\limits_{j_{1}=1}^{n_{1}}}
\cdots%
{\textstyle\sum\limits_{j_{m}=1}^{n_{m}}}
\pm z_{j_{1}}^{(1)}\cdots z_{j_{m}}^{(m)}\text{,}%
\]
and a positive integer $N$ such that%
\[
\Vert A_{n_{1},\ldots,n_{m}}\Vert\leq\left(  1+\epsilon\right)  \max\left\{
n_{1}^{1/2},\ldots,n_{m}^{1/2}\right\}
{\textstyle\prod\limits_{j=1}^{m}}
n_{j}^{1/2}\text{,}%
\]
whenever
\[
\max_{i=1,\ldots,m}\left\{  \min\left\{  n_{j}:j\neq i\right\}  \right\}
>N\text{.}%
\]

\end{theorem}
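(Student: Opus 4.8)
The plan is to rerun the three-step argument of the proof of Theorem~\ref{090}, exploiting the fact that Lemma~\ref{LemaM1} demands a Hadamard matrix only in the $m-1$ \emph{largest} coordinates, not in the smallest one. The first move is to unwind the hypothesis: the condition $\max_{i=1,\ldots,m}\{\min\{n_j:j\neq i\}\}>N$ holds precisely when at most one of $n_1,\ldots,n_m$ is $\leq N$. Indeed, if two distinct indices carried dimensions $\leq N$, then for every $i$ the set $\{j:j\neq i\}$ would meet them and $\min_{j\neq i}n_j\leq N$ would follow; conversely, if $n_{i_0}$ is the only dimension $\leq N$, then $i=i_0$ gives $\min_{j\neq i_0}n_j>N$. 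Consequently, after permuting the factors (which keeps the form within the prescribed class), I may assume $n_1\leq n_2\leq\cdots\leq n_m$ with $n_2,\ldots,n_m>N$ while $n_1$ is arbitrary.

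Next, exactly as in the First Step of Theorem~\ref{090} (consecutive Hadamard orders $4^{i}12^{j}$ have ratio tending to $1$), given $\delta>0$ I choose $N$ so that for each $k=2,\ldots,m$ there is a Hadamard order $t_k$ with $n_k\leq t_k<(1+\delta)n_k$, and I set $t_1:=n_1$. Since $t_1=n_1\leq n_k\leq t_k$ for $k\geq2$, the integer $t_1$ is the smallest of $t_1,\ldots,t_m$, so re-sorting $t_2,\ldots,t_m$ into nondecreasing order (and permuting the corresponding factors) yields $t_1\leq t_2\leq\cdots\leq t_m$ together with a Hadamard matrix in each of the orders $t_2,\ldots,t_m$. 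Lemma~\ref{LemaM1} then furnishes an $m$-linear form $A_0\colon\ell_\infty^{t_1}\times\cdots\times\ell_\infty^{t_m}\to\mathbb{C}$ with coefficients $\pm1$ and $\|A_0\|\leq t_m^{1/2}\prod_{k=1}^m t_k^{1/2}$.

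Finally, imitating the Third Step, I define $A_{n_1,\ldots,n_m}$ by precomposing $A_0$ with the canonical isometric inclusions $\ell_\infty^{n_k}\hookrightarrow\ell_\infty^{t_k}$, which is legitimate because $n_1=t_1$ and $n_k\leq t_k$ for $k\geq2$; hence $\|A_{n_1,\ldots,n_m}\|\leq\|A_0\|$. Since the first factor picks up no enlargement ($t_1=n_1$), while $t_k<(1+\delta)n_k$ for $k\geq2$ and $t_m=\max_k t_k<(1+\delta)\max_k n_k$, a direct computation gives
\[
\|A_{n_1,\ldots,n_m}\|<(1+\delta)^{m/2}\,\max\{n_1^{1/2},\ldots,n_m^{1/2}\}\prod_{j=1}^m n_j^{1/2},
\]
so it suffices to take $\delta$ with $(1+\delta)^{m/2}\leq 1+\epsilon$. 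No serious obstacle arises; the single point requiring care is that the (possibly small, otherwise unrestricted) dimension $n_1$ must be routed to the \emph{first} slot of $A_0$ --- which is exactly the slot for which Lemma~\ref{LemaM1} needs no Hadamard matrix --- and this is automatic because $n_1$ is the least of the dimensions and therefore stays in position $1$ after the reordering.
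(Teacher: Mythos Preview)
Your proof is correct and coincides with the argument the paper has in mind: the paper states this theorem without proof, indicating only that it is a relaxed restatement of Theorem~\ref{090}, and the relaxation rests precisely on the observation you exploit---that Lemma~\ref{LemaM1} requires Hadamard matrices only in the orders $n_2,\ldots,n_m$, never in the smallest order $n_1$. Your unpacking of the hypothesis (at most one $n_j\le N$), the routing of that possibly small dimension to the first slot, and the final estimate $(1+\delta)^{m/2}$ are all in order; the re-sorting of $t_2,\ldots,t_m$ is harmless, though it could be avoided by taking each $t_k$ to be the \emph{least} Hadamard order $\ge n_k$, which automatically preserves the ordering.
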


\section{Remarks}

We begin this section by remarking that well known probabilistic techniques
used in the proof of the classical Kahane--Salem--Zygmund inequalities,
combined with Lemma \ref{LemaB1}, improve the estimates obtained by
Albuquerque and Rezende in \cite{alb} for the remaining cases not covered in
the previous sections. In fact, combining the proof of Mantero and Tonge
\cite[Theorem 1.1]{man} for $p_{1}=\cdots=p_{m}=2$ with Lemma \ref{LemaB1} we
can easily improve the constants of (\ref{I4}) and (\ref{I5}). More precisely,
following the proof of \cite[Theorem 1.1]{man} (noting that the constant $K$
in the proof of \cite[Theorem 1.1]{man} can be chosen as the degree of
multilinearity (now denoted by $m$) and combining with Lemma \ref{LemaB1}, we
can replace the constants of (\ref{I4}) and (\ref{I5}) by
\[
C_{m}\leq2^{m+1}\sqrt{\left(  2m+1\right)  \log\left(  1+4m\right)  }\text{.}%
\]
In the case of real scalars we can avoid the factor $2^{m}$ and get%
\[
C_{m}\leq2\sqrt{\left(  2m+1\right)  \log\left(  1+4m\right)  }\text{.}%
\]

However, it is still an open problem if it is possible to obtain constants
asymptotically dominated by $1$ in these cases.

We also observe that a simple adaptation of the arguments from Lemma
\ref{LemaB1} shows that the constants of the family of inequalities (\ref{I5})
are monotone in the following sense: if $C_{m,p}$ is the optimal constant of
(\ref{I5}) for $p_{1}=\cdots=p_{m}=p$, then%
\[
q\geq p\geq2\Rightarrow C_{m,q}\leq C_{m,p}\text{.}%
\]
We finish this section by remarking that the apparent gap between the results
from Theorems \ref{090} and \ref{TeoB3} is somewhat justified by the following
theorem which follows the exact same lines of the proof of Theorem \ref{090}:

\begin{theorem}
Let $m$ be a positive integer, $p_{1},p_{m}\in\left[  2,\infty\right]  $,
$p_{2}=\cdots=p_{m-1}=\infty$ and $\epsilon>0$. There exists a positive
integer $N$ such that, for any choice of positive integers $n_{1},\ldots
,n_{m}>N$, with $n_{m}=\max\left\{  n_{1},\ldots,n_{m}\right\}  $ and
$n_{1}=\min\left\{  n_{1},\ldots,n_{m}\right\}  $, there exists an $m$-linear
form $A\colon\ell_{p_{1}}^{n_{1}}\times\ell_{p_{2}}^{n_{2}}\times\cdots
\times\ell_{p_{m-1}}^{n_{m-1}}\times\ell_{p_{m}}^{n_{m}}\longrightarrow
\mathbb{K}$ of the type
\[
A(z^{(1)},\ldots,z^{(m)})=%
{\textstyle\sum\limits_{i_{1}=1}^{n_{1}}}
\cdots%
{\textstyle\sum\limits_{i_{m}=1}^{n_{m}}}
\pm z_{i_{1}}^{(1)}\cdots z_{i_{m}}^{(m)}\text{,}%
\]
satisfying%
\[
\Vert A\Vert\leq\left(  1+\epsilon\right)  n_{m}^{1/2}%
{\textstyle\prod\limits_{k=1}^{m}}
n_{k}^{\frac{1}{2}-\frac{1}{p_{k}}}\text{.}%
\]

\end{theorem}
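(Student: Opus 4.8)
The plan is to combine the Hadamard-matrix construction used in Lemma~\ref{LemaM1} and the proof of Theorem~\ref{090} with the $\ell_2\to\ell_p$ rescaling device of Lemma~\ref{LemaB1}, applied only at the two ``bad'' legs $k=1$ and $k=m$. First I would upgrade Lemma~\ref{LemaM1}: assume $t_1\leq\cdots\leq t_m$ are such that a Hadamard matrix of each order $t_k$ exists, and build
\[
A_0\colon\ell_2^{t_1}\times\ell_\infty^{t_2}\times\cdots\times\ell_\infty^{t_{m-1}}\times\ell_2^{t_m}\longrightarrow\mathbb{C}
\]
with $\pm1$ coefficients $h_{i_1i_2}^{(2)}\cdots h_{i_{m-1}i_m}^{(m)}$. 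Running the same Cauchy--Schwarz/Hölder cascade as in Lemma~\ref{LemaM1}, but now estimating the first factor in $\ell_2$ (its natural norm) rather than pulling out an $n_m^{1/2}$ via $\ell_\infty\hookrightarrow\ell_2$, one peels off the sums over $i_m,i_{m-1},\dots,i_2$ using the orthogonality relations $\langle u_i^{(k)},u_j^{(k)}\rangle=t_k\delta_{ij}$ for $k=m,m-1,\dots,2$, giving a bound of the shape $\bigl\Vert A_0\bigr\Vert\le t_m^{1/2}\,t_{m-1}^{1/2}\cdots t_2^{1/2}\cdot 1$ on $B_{\ell_2^{t_1}}\times B_{\ell_\infty^{t_2}}\times\cdots\times B_{\ell_2^{t_m}}$; in other words $\Vert A_0\Vert\le t_m^{1/2}\prod_{k=2}^{m}t_k^{1/2}=t_m^{1/2}\cdot t_m^{1/2}\prod_{k=2}^{m-1}t_k^{1/2}$. (A careful bookkeeping of which index is in $\ell_2$ versus $\ell_\infty$ is needed, but it is exactly the argument of Lemma~\ref{LemaM1}.)

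Second, I would promote the $\ell_2$ legs at positions $1$ and $m$ to $\ell_{p_1}$ and $\ell_{p_m}$ by the Hölder rescaling of Lemma~\ref{LemaB1}: for $x^{(1)}\in B_{\ell_{p_1}^{t_1}}$ and $x^{(m)}\in B_{\ell_{p_m}^{t_m}}$, set $z^{(1)}=t_1^{1/p_1-1/2}x^{(1)}$ and $z^{(m)}=t_m^{1/p_m-1/2}x^{(m)}$, which lie in the respective $\ell_2$-balls since $p_1,p_m\ge2$. This produces a $\pm1$ multilinear form $\widetilde A_0\colon\ell_{p_1}^{t_1}\times\ell_\infty^{t_2}\times\cdots\times\ell_\infty^{t_{m-1}}\times\ell_{p_m}^{t_m}\to\mathbb{C}$ with
\[
\Vert\widetilde A_0\Vert\le t_1^{1/2-1/p_1}\,t_m^{1/2-1/p_m}\,\Vert A_0\Vert\le t_m^{1/2}\prod_{k=1}^{m}t_k^{1/2-1/p_k},
\]
using that $t_k^{1/2-1/p_k}=t_k^{1/2}$ for $k=2,\dots,m-1$ (where $p_k=\infty$) to absorb the $\prod_{k=2}^{m-1}t_k^{1/2}$ from the previous step.

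Third, I would run the First and Third Steps of the proof of Theorem~\ref{090} verbatim. By the Hadamard-order approximation argument (the sequence $4^i12^j$ has consecutive ratios tending to $1$), fix $\delta>0$ with $(1+\delta)^{m+1}<1+\epsilon$ and choose $N$ so that for each $n_k>N$ there is a Hadamard order $t_k$ with $n_k\le t_k<(1+\delta)n_k$; reorder so that $t_1\le\cdots\le t_m$ and (as in the hypothesis) arrange the labels so that the two nonclassical legs sit at positions $1$ and $m$, which is permitted precisely because the statement requires $n_m=\max$ and $n_1=\min$. Then restrict $\widetilde A_0$ to the first $n_k$ coordinates in each variable (zero-padding), which can only decrease the norm, and estimate
\[
\Vert A\Vert\le t_m^{1/2}\prod_{k=1}^{m}t_k^{1/2-1/p_k}\le (1+\delta)^{1/2+\sum_k(1/2-1/p_k)}\,n_m^{1/2}\prod_{k=1}^{m}n_k^{1/2-1/p_k}\le(1+\epsilon)\,n_m^{1/2}\prod_{k=1}^{m}n_k^{1/2-1/p_k},
\]
since $1/2+\sum_{k=1}^m(1/2-1/p_k)\le (m+1)/2$. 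The case of real scalars follows by restriction, as usual.

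The main obstacle is the first step: one has to check that the Cauchy--Schwarz/orthogonality cascade of Lemma~\ref{LemaM1} still telescopes cleanly when the \emph{first} variable is measured in $\ell_2$ rather than $\ell_\infty$ --- i.e.\ that after extracting the sums over $i_m,\dots,i_2$ via the Hadamard relations one is left exactly with $\sum_{i_1}|y_{i_1}^{(1)}|^2\le1$ and no stray dimensional factor. This is really the observation that in Lemma~\ref{LemaM1} the ``extra'' $n_m^{1/2}$ comes solely from applying Cauchy--Schwarz to the $i_m$-sum against $\ell_\infty$-bounded data, so measuring $x^{(m)}$ in $\ell_2$ removes that factor, while measuring $x^{(1)}$ in $\ell_2$ removes the final $\ell_\infty\hookrightarrow\ell_2$ inflation; the $m-2$ middle Hadamard orthogonalities each contribute a clean $t_k^{1/2}$ regardless. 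Once this accounting is pinned down, Steps~2 and~3 are routine.
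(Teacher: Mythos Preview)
Your approach is correct and is exactly what the paper intends: the authors do not give a proof but simply state that the theorem ``follows the exact same lines of the proof of Theorem~\ref{090}'', and your three-step plan is precisely that proof, with the only modification being that the first and last variables are measured in $\ell_2$ (then promoted to $\ell_{p_1},\ell_{p_m}$ via H\"older) rather than in $\ell_\infty$. Your diagnosis in the final paragraph is the right one: in the cascade of Lemma~\ref{LemaM1}, the factor $(\sum_{i_m}|y_{i_m}^{(m)}|^2)^{1/2}$ at the start and the factor $(\sum_{i_1}|y_{i_1}^{(1)}|^2)^{1/2}$ at the end are the only places the first and last variables enter, and both are already $\ell_2$-norms, so no inflation occurs there; the intermediate steps use $|y_{i_k}^{(k)}|^2\le1$ for $k=2,\dots,m-1$, which is exactly the hypothesis $p_2=\cdots=p_{m-1}=\infty$.

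Two small bookkeeping points. First, there is a slip in your ``in other words'' clause: the bound you derive is $\Vert A_0\Vert\le t_m^{1/2}t_{m-1}^{1/2}\cdots t_2^{1/2}=\prod_{k=2}^{m}t_k^{1/2}$, not $t_m^{1/2}\prod_{k=2}^{m}t_k^{1/2}$; your Step~2 computation is consistent with the former (correct) bound, so this is only a typo. Second, Lemma~\ref{LemaM1} needs the full ordering $t_1\le t_2\le\cdots\le t_m$ (so that the coefficients $h_{i_{k-1}i_k}^{(k)}$ are genuine $\pm1$ entries, not zeros from the padding); the hypotheses $n_1=\min$ and $n_m=\max$ pin down the endpoints, and since positions $2,\dots,m-1$ all carry $p_k=\infty$ you may freely permute those middle legs to sort their $t_k$'s. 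With those two clarifications your argument goes through verbatim.
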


\section{Application: Berlekamp's switching game}

Berlekamp's switching game (sometimes called Gale-Berlekamp switching game or
unbalancing lights problem, see \cite[Section 2.5]{alon} and \cite[Chapter
6]{spencer}) consists of an $n\times n$ square matrix of light bulbs set up at
an initial configuration $\Theta_{n}$. The board has $n$ row and $n$ column
switches, which invert the on-off state of each bulb (on to off and off to on)
in the corresponding row or column. Let $i(\Theta_{n})$ denote the smallest
final number of on-lights achievable by row and column switches starting from
$\Theta_{n}$. The goal is to find the value $R_{n}$ of $i(\Theta_{n}^{(0)})$
when $\Theta_{n}^{(0)}$ is (one of) the worst initial patterns, i.e.,
$i(\Theta_{n}^{(0)})\geq i(\Theta_{n})$ for all $\Theta_{n}$. Thus
\[
R_{n}:=\max\{i(\Theta_{n}):\Theta_{n}\text{ is an initial configuration of
}n\times n\text{ lights}\}\text{.}%
\]
Sometimes the problem is posed as to find the maximum of the difference
between the state of the light bulbs (starting from one of the worst initial
patterns, as before), which we shall henceforth denote by $G_{n}$. It is
simple to check that
\[
R_{n}=\frac{1}{2}\left(  n^{2}-G_{n}\right)  \text{.}%
\]
The determination of the exact value of $R_{n}$ seems to be conceivable only
for small values of $n$ due to involving combinatorial arguments. The exact
value of $R_{n}$ for $n$ up to $12$ was obtained by Carlson and Stolarski
(\cite{car}; see also \cite{TBJS,slo}). For bigger values of $n$, optimal
constructive approaches seem impracticable and no algorithm to construct such
a \textquotedblleft bad\textquotedblright\ configuration $\Theta_{n}$ seems to
be known. Thus, for bigger values of $n$, probabilistic (non-deterministic)
methods are used to provide estimates for $R_{n}$ and $G_{n}$. The natural
approach to modeling Berlekamp's switching game is by associating $+1$ to the
on-lights and $-1$ to the off-lights from the array of lights $\left(
a_{ij}\right)  _{i,j=1}^{n}$ and observing that
\[
G_{n}=\min\left\{  \max_{\left(  x_{i}\right)  _{i=1}^{n},\left(
y_{j}\right)  _{j=1}^{n}\in\{-1,1\}^{n}}\left\vert
{\textstyle\sum\limits_{i,j=1}^{n}}
a_{ij}x_{i}y_{j}\right\vert :a_{ij}=-1\text{ or }+1\right\}  \text{,}%
\]
where $x_{i}$ and $y_{j}$ denote the switches of row $i$ and of column $j$, respectively.

Berlekamp's switching game has several natural variants (see, for instance,
\cite{BS,BRUA,PST} and the references therein); we also refer to \cite{kalai}
for a recent related result. We shall be interested in its extension to higher
dimensions (see \cite{ara}). Let $m\geq2$ be an integer and let an
$n\times\cdots\times n$ array $\left(  a_{j_{1}\cdots j_{m}}\right)  $ of
lights be given, each either on ($a_{j_{1}\cdots j_{m}}=1$) or off
($a_{j_{1}\cdots j_{m}}=-1$). Let us also suppose that for each $k=1,\ldots,m$
and each $j_{k}=1,\ldots,n$ there is a switch $x_{j_{k}}^{(k)}$ so that if the
switch is pulled ($x_{j_{k}}^{(k)}=-1$) all of the corresponding lights
$a_{j_{1}\ldots j_{m}}$ (with $j_{k}$ fixed) are switched: on to off or off to
on. The goal is to maximize the difference between the number of lights that
are on and the number of lights that are off.

\begin{figure}[ptb]
\centering\includegraphics[scale=0.56]{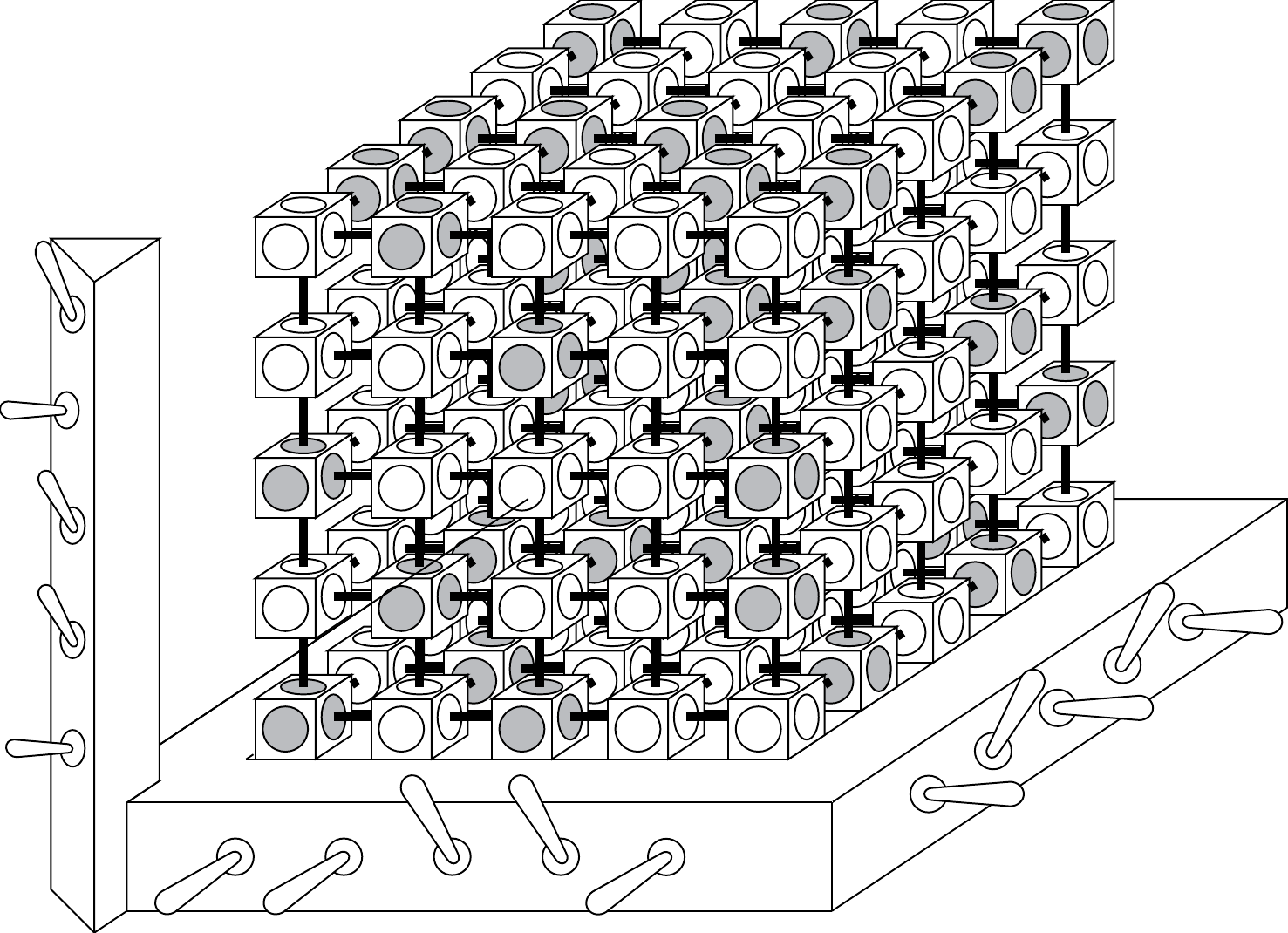}\caption{Three-dimensional
Berlekamp's switching game for $n=5$.}%
\end{figure}

As in the two-dimensional case, maximizing the difference between the number
of on-lights and off-lights is equivalent to estimating
\[
\max_{(x_{j_{1}}^{(1)})_{j_{1}=1}^{n},\ldots,(x_{j_{m}}^{(m)})_{j_{m}=1}%
^{n}\in\{-1,1\}^{n}}\left\vert
{\textstyle\sum\limits_{j_{1},\ldots,j_{m}=1}^{n}}
a_{j_{1}\ldots j_{m}}x_{j_{1}}^{(1)}\cdots x_{j_{m}}^{(m)}\right\vert
\]
and the extreme problem consists of estimating
\[
S_{m,n}=\min\left\{  \max_{(x_{j_{1}}^{(1)})_{j_{1}=1}^{n},\ldots,(x_{j_{m}%
}^{(m)})_{j_{m}=1}^{n}\in\{-1,1\}^{n}}\left\vert
{\textstyle\sum\limits_{j_{1},\ldots,j_{m}=1}^{n}}
a_{j_{1}\ldots j_{m}}x_{j_{1}}^{\left(  1\right)  }\cdots x_{j_{m}}^{\left(
m\right)  }\right\vert :a_{j_{1}\ldots j_{m}}=1\text{ or }-1\right\}  \text{,}%
\]
It is folklore (a consequence of the Krein--Milman Theorem) that
\begin{equation}
S_{m,n}=\min\left\Vert A_{m,n}\colon\ell_{\infty}^{n}\times\cdots\times
\ell_{\infty}^{n}\longrightarrow\mathbb{R}\right\Vert \text{,} \label{uzz}%
\end{equation}
with%
\[
A_{m,n}\left(  x^{(1)},\ldots,x^{(m)}\right)  =%
{\textstyle\sum\limits_{j_{1},\ldots,j_{m}=1}^{n}}
a_{j_{1}\ldots j_{m}}x_{j_{1}}^{(1)}\cdots x_{j_{m}}^{(m)}\text{.}%
\]

As an illustrative example of how the estimates of the constants of the KSZ
inequality are associated to this framework, note that, combining (\ref{uzz})
and Lemma \ref{M1}, we have $G_{16}\leq64$ and hence $R_{16}\geq96,$ improving
the estimate $R_{16}\geq94$ given by \cite[Theorem 8]{carl}.

Let us choose, as usual in this setting, the notation $o(1)$ to represent
that, in the variable $n$, we have $\lim\limits_{n\rightarrow\infty}o(1)=0$.
The results of Section 2 show that
\[
S_{m,n}\leq\left(  1+o(1)\right)  n^{\frac{m+1}{2}}\text{,}%
\]
and this result improves the best known constants to this problem, which were
obtained by the KSZ inequality, i.e.,
\[
S_{m,n}\leq\left(  \sqrt{32m\log\left(  6m\right)  }\sqrt{m!}\right)
n^{\frac{m+1}{2}}\text{.}%
\]

Combining our estimates with those in \cite[Theorem 3.2]{ara} and
\cite[Theorem 2.5.1]{alon}, we obtain
\[
2^{-\frac{1}{2}\psi\left(  m\right)  -\frac{1}{2}\gamma}%
{\textstyle\prod\limits_{k=2}^{m}}
\left(  \dfrac{\Gamma\left(  \frac{3k-2}{2k}\right)  }{\Gamma\left(  \frac
{3}{2}\right)  }\right)  ^{\frac{k}{2k-2}}+o\left(  1\right)  \leq
\dfrac{S_{m,n}}{n^{\frac{m+1}{2}}}\leq1+o\left(  1\right)  \text{,}%
\]
where $\psi$ is the digamma function and $\gamma$ is the Euler-Mascheroni
constant. For instance, for $m=2,3$ we have%
\begin{align*}
0.797+o\left(  1\right)   &  \leq\dfrac{S_{2,n}}{n^{3/2}}\leq1+o\left(
1\right)  \text{,}\\
0.694+o\left(  1\right)   &  \leq\dfrac{S_{3,n}}{n^{2}}\leq1+o\left(
1\right)  \text{.}%
\end{align*}

\bigskip

\medskip\noindent\textbf{Acknowledgment.} The authors thank Fernando Vieira
Costa Jr and Janiely Silva for their support and generosity.


\begin{thebibliography}{99}                                                                                               %


\bibitem {jfa}N. Albuquerque, F. Bayart, D. Pellegrino, J.
Seoane-Sep\'{u}lveda, Sharp generalizations of the multilinear
Bohnenblust-Hille inequality. J. Funct. Anal. 266 (2014), no. 6, 3726--3740.

\bibitem {israel}N. Albuquerque, F. Bayart, D. Pellegrino, J.
Seoane-Sep\'{u}lveda, Optimal Hardy-Littlewood type inequalities for
polynomials and multilinear operators. Israel J. Math. 211 (2016), no. 1, 197--220.

\bibitem {alb}N. Albuquerque, L. Rezende, Asymptotic estimates for unimodular
multilinear forms with small norms on sequence spaces, Bull. Braz. Math. Soc.
52 (2021), 23--39.

\bibitem {alon}N. Alon, J. H. Spencer, The probabilistic method. Fourth
edition. Wiley Series in Discrete Mathematics and Optimization. John Wiley \&
Sons, Inc., Hoboken, NJ, 2016. xiv+375 pp.

\bibitem {ara}G. Ara\'{u}jo, D. Pellegrino, A Gale-Berlekamp
permutation-switching problem in higher dimensions. European J. Combin. 77
(2019), 17--30.

\bibitem {bayart}F. Bayart, Maximum modulus of random polynomials. Q. J. Math.
63 (2012), no. 1, 21--39.

\bibitem {BS}J. Beck, J. Spencer, Balancing matrices with line shifts.
Combinatorica 3 (1983), 299--304.

\bibitem {be2}G. Bennett, V. Goodman, C.M. Newman, Norms of random matrices.
Pacific J. Math. 59 (1975), no. 2, 359--365.

\bibitem {be1}G. Bennett, Schur multipliers. Duke Math. J. 44 (1977), no. 3, 603--639.

\bibitem {boasfootball}H.P. Boas, The football player and the infinite series.
Notices Amer. Math. Soc. 44 (1997), no. 11, 1430--435.

\bibitem {boas}H.P. Boas, Majorant series. Several complex variables (Seoul,
1998). J. Korean Math. Soc. 37 (2000), no. 2, 321--337.

\bibitem {bk}H.P. Boas, D. Khavinson, Bohr's power series theorem in several
variables. Proc. Amer. Math. Soc. 125 (1997), no. 10, 2975--2979.

\bibitem {bh}H.F. Bohnenblust and E. Hille, On the absolute convergence of
Dirichlet series, Ann. of Math. 32 (1931), 600--622.

\bibitem {TBJS}T.A. Brown, J.H. Spencer, Minimization of $\pm1$ matrices under
line shifts, Colloq. Math. 23 (1971), 165--171.

\bibitem {BRUA}R.A. Brualdi, S.A. Meyer, A Gale-Berlekamp
permutation-switching problem. European J. Combin. 44 (2015), part A, 43--56.

\bibitem {carl}B. Carl, B. Maurey, J. Puhl, Grenzordnungen von absolut-$(r,p)$%
-summierenden Operatoren. (German) Math. Nachr. 82 (1978), 205--218.

\bibitem {car}J. Carlson, D. Stolarski, The correct solution to Berlekamp's
switching game, Discrete Math. 287 (2004), 145--150.

\bibitem {dgm}A. Defant, D. Garc\'{\i}a, M. Maestre, Maximum moduli of
unimodular polynomials. Satellite Conference on Infinite Dimensional Function
Theory. J. Korean Math. Soc. 41 (2004), no. 1, 209--229.

\bibitem {defantlivro}A. Defant, D. Garc\'{\i}a, M. Maestre, P. Sevilla-Peris,
Dirichlet series and holomorphic functions in high-dimensions, New
Mathematical Monographs: 37, Cambridge University Press (2019).

\bibitem {defmas}A. Defant, M. Masty\l o, Subgaussian Kahane--Salem--Zygmund
inequalities in Banach spaces, preprint (2020), 47 pp. arXiv:2008.00429.

\bibitem {dineen}S. Dineen, R. Timoney, Absolute bases, tensor products and a
theorem of Bohr. Studia Math. 94 (1989), no. 3, 227--234.

\bibitem {slo}P.C. Fishburn, N.J.A. Sloane, The solution to Berlekamp's
switching game, Discrete Math. 74 (1989), 263--290.

\bibitem {KJH}K.J. Horadam, Hadamard matrices and their applications,
Princeton University Press (2007).

\bibitem {kah}J.-P. Kahane, Some random series of functions, second ed.,
Cambridge University Press (1985).

\bibitem {kalai}G. Kalai, L.J. Schulman, Quasi-random multilinear polynomials.
Israel J. Math. 230 (2019), no. 1, 195--211.

\bibitem {man}A. Mantero, A. Tonge, The Schur multiplication in tensor
algebras, Studia Math. 68 (1980), no. 1, 1--24.

\bibitem {mastylo}M. Masty\l o, R. Szwedek, Kahane-Salem-Zygmund polynomial
inequalities via Rademacher processes. J. Funct. Anal. 272 (2017), no. 11, 4483--4512.

\bibitem {PRS}D. Pellegrino, D. Serrano-Rodr\'{\i}guez, J. Silva, On
unimodular forms with small norms on sequence spaces, Linear Algebra Appl. 595
(2020), 24--32.

\bibitem {PST}D. Pellegrino, J. Silva, E. Teixeira, On a continuous
Gale--Berlekamp switching game, to appear in An. Acad. Brasil. Ci\^{e}nc.

\bibitem {vel}J. Santos, T. Velanga, On the Bohnenblust-Hille inequality for
multilinear forms. Results Math. 72 (2017), no. 1-2, 239--244.

\bibitem {JSpencer}J. Spencer, Discrete Ham Sandwich Theorems, Europ. J.
Combinatorics 2 (1981), 291--298.

\bibitem {spencer}J. Spencer, Ten lectures on the probabilistic method. Second
edition. CBMS-NSF Regional Conference Series in Applied Mathematics, 64.
Society for Industrial and Applied Mathematics (SIAM), Philadelphia, PA, 1994.

\bibitem {var}N. Th. Varopoulos, On an inequality of von Neumann and an
application of the metric theory of tensor products to operators theory. J.
Funct. Anal. 16 (1974), 83--100.
\end{thebibliography}
\end{document}